\newtheorem{thm}[subsection]{Theorem}
\newtheorem{lem}[subsection]{Lemma}
\newtheorem{cor} [subsection]{Corollary}
\theoremstyle{definition}
\newtheorem{dfn} [subsection]{Definition}
\theoremstyle{remark}
\newtheorem{obs} [subsection]{Remark}
\def\supp{\operatorname{supp}}
\def\reg{\operatorname{reg}}
\def\k {\mathrm{k}}
\DeclareMathOperator{\pd}{pd}
\DeclareMathOperator{\Tor}{Tor}
\numberwithin{equation}{section}
\begin{document}

\title[Graded Betti numbers of powers of path ideals of paths]{Graded Betti numbers of powers of path ideals of paths}
\author[Silviu B\u al\u anescu, Mircea Cimpoea\c s, and Thanh Vu]{Silviu B\u al\u anescu$^1$, Mircea Cimpoea\c s$^2$ and Thanh Vu$^3$}  
\date{}

\keywords{Betti numbers, Monomial ideal, Path ideal}

\subjclass[2020]{13C15, 13P10, 13F20}

\footnotetext[1]{ \emph{Silviu B\u al\u anescu}, National University of Science and Technology Politehnica Bucharest, Faculty of
Applied Sciences, %Department of Mathematical Methods and Models, 
Bucharest, 060042, E-mail: silviu.balanescu@stud.fsa.upb.ro}
\footnotetext[2]{ \emph{Mircea Cimpoea\c s}, National University of Science and Technology Politehnica Bucharest, Faculty of
Applied Sciences, %Department of Mathematical Methods and Models, 
Bucharest, 060042, Romania and Simion Stoilow Institute of Mathematics, Research unit 5, P.O.Box 1-764,
Bucharest 014700, Romania, E-mail: mircea.cimpoeas@upb.ro,\;mircea.cimpoeas@imar.ro}
\footnotetext[3]{ \emph{Thanh Vu}, Institute of Mathematics, VAST, 18 Hoang Quoc Viet, Hanoi, Vietnam 122300, E-mail: vuqthanh@gmail.com}
\dedicatory{Dedicated to the memory of professor J\"urgen Herzog (1941--2024)}
\begin{abstract}  Let $I_{n,m} = (x_1\cdots x_{m},x_2 \cdots x_{m+1},\ldots,x_{n+1}x_{n+2}\cdots x_{n+m})$ be the $m$-path ideal of a path of length $n + m-1$ over a polynomial ring $S = \k[x_1,\ldots,x_{n+m}]$. We compute all the graded Betti numbers of all powers of $I_{n,m}$.
\end{abstract}

\maketitle

\section{Introduction}

Let $S = \k[x_1,\ldots,x_n]$ be a standard graded polynomial ring over a field $\k$ and $M$ a finitely generated graded $S$-module. The Hilbert Syzygy theorem states that $M$ has a finite graded free resolution
$$0 \longleftarrow M \longleftarrow F_0 \longleftarrow \cdots \longleftarrow F_r \longleftarrow 0,$$
where $F_i = \oplus_j S(-j)^{\beta_{i,j}(M)}$. The numbers $\beta_{i,j}(M)$ are called the graded Betti numbers of $M$. From the resolution, Hilbert deduced that the Hilbert polynomial of $M$ is 
$$p_M(z) = \sum_{i=0}^r (-1)^i \sum_j \beta_{i,j}(M) \binom{z + n-1 -j}{n-1}.$$
Hence, from the graded Betti numbers of $M$, one can deduce the dimension, projective dimension, regularity, and multiplicity of $M$. 

In \cite{BC1, SL, SWL}, the authors compute the projective dimension, the regularity, and the multiplicity of powers of path ideals of paths, respectively. In this paper, we compute all the graded Betti numbers of powers of path ideals of paths, giving a unified approach to the mentioned work. We first introduce some notation. Let $m \ge 2$ and $n \ge 0$ be integers. We denote by 
$$I_{n,m} = (x_1\cdots x_{m},x_2 \cdots x_{m+1},\ldots,x_{n+1}x_{n+2}\cdots x_{n+m}) \subseteq S = \k[x_1,\ldots,x_{n+m}]$$
the $m$-path ideal of a path on $n+m$-vertices. By convention, for integers $a,b$, if $a < 0$, $b< 0$ or $a < b$ then $\binom{a}{b} = 0$. Our main result is:

\begin{thm}\label{teo1} Let $m \ge 2$, $n \ge 0$, $t \ge 1$ be integers. The Betti number $\beta_{i,j}(I_{n,m}^t)$ is $0$ unless $j = i + tm + (m-1) \ell$ for some integer $\ell$ such that $0 \le \ell \le i$ and 
$$\beta_{i,i+tm + (m-1)\ell}(I_{n,m}^t)=\binom{t+\ell-1}{\ell}\binom{n-\ell m}{i-\ell}\binom{n+t-\ell m-i+\ell}{t-i+2\ell}.$$    
\end{thm}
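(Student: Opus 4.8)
The plan is to prove Theorem~\ref{teo1} by a simultaneous induction on $t$ and on $n$, driven by a Betti splitting of $I_{n,m}^t$ coming from its last generator $w := u_{n+1} = x_{n+1}\cdots x_{n+m}$. First I would record the combinatorics of the generators: the minimal monomial generators of $I_{n,m}^t$ are exactly the products $u_{k_1}\cdots u_{k_t}$ with $1 \le k_1 \le \cdots \le k_t \le n+1$, distinct multisets give distinct monomials, and no one of them divides another (the multiset $\{k_1,\dots,k_t\}$ is recovered from the exponent vector by a greedy left-to-right reading). Separating the generators that involve $u_{n+1}$ from those that do not yields $I_{n,m}^t = I_{n-1,m}^t + w\,I_{n,m}^{t-1}$, where $I_{n-1,m} = (u_1,\dots,u_n)$ uses only $x_1,\dots,x_{n+m-1}$ and $w\,I_{n,m}^{t-1} \cong I_{n,m}^{t-1}(-m)$. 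Since the two generating sets are disjoint, I would verify the hypotheses of the Francisco--H\`a--Van Tuyl splitting criterion, obtaining
$$\beta_{i,j}(I_{n,m}^t) = \beta_{i,j}(I_{n-1,m}^t) + \beta_{i,j-m}(I_{n,m}^{t-1}) + \beta_{i-1,j}\bigl(I_{n-1,m}^t \cap w\,I_{n,m}^{t-1}\bigr).$$

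The heart of the argument is the computation of the intersection: $I_{n-1,m}^t \cap w\,I_{n,m}^{t-1} = w\bigl(I_{n-1,m}^t + x_n\,I_{n-1,m}^{t-1}\bigr)$. This rests on two facts. First, $(I_{n-1,m}^t : w) = I_{n-1,m}^t + x_n\,I_{n-1,m}^{t-1}$: the inclusion "$\supseteq$" is clear because $w\,x_n = x_{n+m}\,u_n$, and "$\subseteq$" reduces to the colon by $x_{n+1}\cdots x_{n+m-1}$ (since $x_{n+m}$ divides no $u_k$ with $k \le n$) and is then checked directly. Second, using the elementary identity $A \cap wB = w\bigl((A : w) \cap B\bigr)$ with $A = I_{n-1,m}^t$ and $B = I_{n,m}^{t-1}$, together with $(A:w) = I_{n-1,m}^t + x_n\,I_{n-1,m}^{t-1} \subseteq I_{n-1,m}^{t-1} \subseteq B$, the intersection collapses to $w(A:w)$. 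Hence the third summand above is $\beta_{i-1,j-m}\bigl(I_{n-1,m}^t + x_n\,I_{n-1,m}^{t-1}\bigr)$, and everything is expressed through path-ideal powers and the auxiliary ideals $L_{N,m}^s := I_{N,m}^s + x_{N+1}\,I_{N,m}^{s-1}$.

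I would therefore run a companion induction for the family $L_{N,m}^s = I_{N,m}^{s-1}\bigl(I_{N,m} + (x_{N+1})\bigr)$. Here $I_{N,m} + (x_{N+1}) = I_{N-m,m} + (x_{N+1})$ is a path ideal in $x_1,\dots,x_N$ together with the disjoint variable $x_{N+1}$, so its Betti numbers follow at once from those of $I_{N-m,m}$; and a splitting of $L_{N,m}^s$ entirely parallel to the construction above (peeling off $x_{N+1}$ together with the last window) reduces its Betti numbers to those of path-ideal powers and of smaller members of the $L$-family, and I expect a closed form of the same three-binomial type --- possibly after enlarging the family slightly so that it is closed under the recursion. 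The base cases are $n=0$, where $I_{0,m}^t$ is principal so $\beta_{0,tm}=1$ and the formula is immediate, and $t=1$, the resolution of the path ideal $I_{n,m}$ itself, which drops out of the $n$-recursion with $t$ fixed. Finally, feeding the claimed formulas into all of these recursions and substituting $\ell = a$, $i-\ell = b$, $t+\ell-i = c$, the required identities collapse to a handful of applications of Pascal's rule and a Vandermonde convolution, while the internal degree $j = i + tm + (m-1)\ell$ is matched through the three summands by direct inspection.

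The step I expect to be the main obstacle is twofold: (i) pinning down an auxiliary family in the previous paragraph that is genuinely closed under the induction while still admitting a clean Betti-number formula; and (ii) verifying the splitting hypothesis at each stage, since $\beta_{i,j}(I_{n-1,m}^t)$, $\beta_{i,j-m}(I_{n,m}^{t-1})$ and $\beta_{i-1,j}\bigl(I_{n-1,m}^t \cap w\,I_{n,m}^{t-1}\bigr)$ all occupy the same internal degrees, so the vanishing of the connecting homomorphisms in the long exact sequence in $\Tor$ has to be deduced from the fine multigraded structure rather than from a crude degree count. The binomial bookkeeping, though lengthy, should be routine.
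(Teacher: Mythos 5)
Your overall plan is genuinely workable and, at the level of strategy, parallels the paper: the paper also works with Betti splittings attached to the last window, its colon computation (Lemma \ref{lem_colon_2} at $s=1$ gives $I_{n-1,m}^t : w = I_{n-m-1,m}^t + x_nI_{n-1,m}^{t-1}$, which coincides with your $I_{n-1,m}^t+x_nI_{n-1,m}^{t-1}$), and the same Pascal-type verification at the end. Your top-level splitting $I_{n,m}^t = I_{n-1,m}^t + wI_{n,m}^{t-1}$ differs from the paper's $I_{n,m}^t=(w^t)+I_{n-1,m}I_{n,m}^{t-1}$ (the paper is forced into the two-parameter family $I_{n-1,m}^sI_{n,m}^t$, yours stays with pure powers), and if completed it yields exactly the recursion of Lemma \ref{lem_rec_2} directly. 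But the two obstacles you flag are genuine gaps, and your suggested ways around them would not work as written. For the auxiliary ideal $L=I_{n-1,m}^t+x_nI_{n-1,m}^{t-1}$: ``peeling off $x_n$'' gives the decomposition $I_{n-1,m}^t + x_nI_{n-1,m}^{t-1}$ whose intersection is $x_n\bigl((I_{n-1,m}^t:x_n)\cap I_{n-1,m}^{t-1}\bigr)$, and the colon $I_{n-1,m}^t:x_n$ brings in the truncated windows $f_i/x_n$, so you leave the class of path-ideal powers -- this is precisely why you feel you may have to ``enlarge the family''. The clean fix (the paper's Lemma \ref{lem_decomp2}) is the power-level identity $I_{n-1,m}^t+x_nI_{n-1,m}^{t-1}=I_{n-m-1,m}^t+x_nI_{n-1,m}^{t-1}$ (not just the degree-one statement $I_{n-1,m}+(x_n)=I_{n-m-1,m}+(x_n)$ you record), after which the intersection is simply $x_nI_{n-m-1,m}^t$ and everything reduces to pure powers; note also that your remark that the Betti numbers of $I_{N,m}^{s-1}(I_{N,m}+(x_{N+1}))$ ``follow at once'' from those of $I_{N-m,m}$ conflates the sum with the product -- the product is exactly the ideal whose Betti numbers still have to be split.

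Second, the splitting hypothesis cannot be verified the way you suggest. Disjointness of the two generating sets is not a usable Francisco--H\`a--Van Tuyl hypothesis (their checkable criterion asks for a linear resolution of one piece, which $wI_{n,m}^{t-1}$ fails once $n\ge m+1$), and ``fine multigraded structure'' only settles half the problem: of the two inclusions required by the $\Tor$-vanishing criterion (Lemma \ref{lem_splitting_criterion}), the map into $I_{n-1,m}^t$ does die for multidegree reasons (every multidegree of $\Tor_i(\k,wL)$ has $x_{n+m}$-exponent $1$, while those of $\Tor_i(\k,I_{n-1,m}^t)$ have exponent $0$), but the inclusion $wL\to wI_{n,m}^{t-1}$ lives in overlapping multidegrees, so no grading count can force it to vanish. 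What is actually needed there is the theorem of Nguyen and Vu that the inclusion $P^{k+1}\subseteq P^k$ of powers of a monomial ideal is $\Tor$-vanishing, applied through the factorizations $I_{n-m-1,m}^t\subseteq I_{n,m}^t\subseteq I_{n,m}^{t-1}$ and $x_nI_{n-1,m}^{t-1}\to I_{n-1,m}^{t-1}\subseteq I_{n,m}^{t-1}$, together with the Betti splitting of $L$ itself to transfer $\Tor$-vanishing from the two summands to $L$ (this is how the paper argues in Lemmas \ref{lem_decomp1} and \ref{lem_decomp2}). With these two repairs your route closes, and it is in fact slightly leaner than the paper's, since it avoids the colons by higher powers $f_{n+1}^s$ and the mixed products $I_{n-1,m}^sI_{n,m}^t$; the concluding binomial check is the same as in the paper.
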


Our result also covers the result of Alilooee and Faridi \cite{AF} for the case of path ideals of paths. As in \cite{BCV}, where we compute all the graded Betti numbers of $n-1$ and $n-2$ path ideals of $n$-cycles, we employ the following procedure.

\begin{enumerate}
    \item We use the following general idea from \cite{NV}. To study the ideal $I^n$, we break $I = J + K$ in a nice way, then we study the family $J^s I^t$ as this family has the natural decomposition $J^s I^t = J^{s+1} I^{t-1} + J^s K^t$. We prove that these decompositions are Betti splittings. 
    \item We then compute the intersection $J^{s+1} I^{t-1} \cap J^s K^t$ and exhibit further Betti splitting for the intersection.
    \item To establish the Betti splittings, we use the $\Tor$-vanishing criteria in \cite{NV}.
\end{enumerate}
The main difficulties in the procedure are finding the intersection $J^{s+1} I^{t-1} \cap J^s K^t$ and finding further Betti splittings for the intersection. The family of path ideals of paths have a natural decomposition $I_{n,m} = I_{n-1,m} + (f_{n+1})$ where $f_{n+1} = x_{n+1} \cdots x_{n+m}$. We then follow the steps outlined above to find relevant Betti splittings. From that, we deduce recursive equations for the graded Betti numbers. We then deduce the generating function for the graded Betti numbers and arrive at the proof of Theorem \ref{teo1}. Consequently, we deduce the results of \cite{BC1, SL}. 

We structure the paper as follows. In section \ref{sec_pre}, we provide background and recall the results on Betti splittings. In section \ref{sec_betti}, we establish various Betti splittings for the powers of path ideals of paths. We then deduce Theorem \ref{teo1}. Finally, we derive the formula for the projective dimension and regularity of $I_{n,m}^t$. 

\section{Preliminaries}\label{sec_pre}

Throughout this section, we let $S = \k[x_1,\ldots, x_n]$ be the polynomial ring over an arbitrary field $\k$, with the standard grading.

\subsection*{Projective dimension and regularity} Let $M$ be a finitely generated graded $S$-module. For integers $i,j$ with $i \ge 0$, the $(i,j)$-graded Betti number of $M$ is defined by 
$$\beta_{i,j}(M) = \dim_\k \Tor_i^S (\k,M)_j.$$

The projective dimension of $M$, denoted by $\pd(M)$, and the Castfelnuovo-Mumford regularity of $M$, denoted by $\reg(M)$, are defined as follows:
\begin{align*}
    \pd_S (M) &= \sup \{i :\; \beta_{i,j}(M) \neq 0 \text{ for some } j\},\\
    \reg_S(M) &= \sup \{ j -i :\; \beta_{i,j} (M) \neq 0\}.
\end{align*}

The following result is well-known.

\begin{lem}\label{lem_mul_x} Let $x$ be a variable and $I$ a nonzero homogeneous ideal of $S$. Then 
$$\beta_{i,j}(xI)=\beta_{i,j-1}(I) \text{ for all } i\ge 0.$$
\end{lem}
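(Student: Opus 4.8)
The plan is to prove Lemma~\ref{lem_mul_x} by transporting a minimal graded free resolution of $I$ across multiplication by $x$. Since $x$ is a variable, it is a nonzerodivisor on $S$, hence on any free module $S(-j)$ and (because $S$ is a domain) on the ideal $I$ itself. So the map $\mu_x \colon I \to xI$ sending $f \mapsto xf$ is an isomorphism of $S$-modules; it is, however, \emph{not} graded for the standard grading of $I$ as a submodule of $S$ — rather, it shifts internal degree by $1$, i.e.\ $\mu_x \colon I(-1) \xrightarrow{\ \sim\ } xI$ is a graded isomorphism of degree $0$.

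First I would take a minimal graded free resolution
$$0 \longleftarrow I \longleftarrow F_0 \longleftarrow F_1 \longleftarrow \cdots \longleftarrow F_r \longleftarrow 0, \qquad F_i = \bigoplus_j S(-j)^{\beta_{i,j}(I)}.$$
Applying the exact (grading-shift) functor $(-)(-1)$ gives a complex of free modules resolving $I(-1)$, with $F_i(-1) = \bigoplus_j S(-j-1)^{\beta_{i,j}(I)} = \bigoplus_j S(-j)^{\beta_{i,j-1}(I)}$. Composing the augmentation $F_0(-1) \to I(-1)$ with the graded isomorphism $I(-1) \cong xI$ yields a graded free resolution of $xI$. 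Since tensoring or shifting by an invertible twist does not affect minimality (the entries of the differential matrices still lie in the maximal ideal $\mathfrak m$), this resolution is again minimal.

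Then I would read off the Betti numbers: by definition $\beta_{i,j}(xI) = \dim_\k \Tor_i^S(\k, xI)_j$, and since a minimal free resolution computes Betti numbers as the ranks of the graded pieces of the free modules, the $i$-th free module $\bigoplus_j S(-j)^{\beta_{i,j-1}(I)}$ in the resolution of $xI$ gives $\beta_{i,j}(xI) = \beta_{i,j-1}(I)$ for all $i \ge 0$ and all $j$. Alternatively, and perhaps more cleanly, one can argue directly on $\Tor$: the graded isomorphism $xI \cong I(-1)$ gives $\Tor_i^S(\k, xI) \cong \Tor_i^S(\k, I(-1)) \cong \Tor_i^S(\k, I)(-1)$ as graded vector spaces, whence $\Tor_i^S(\k,xI)_j \cong \Tor_i^S(\k,I)_{j-1}$ and the formula follows by taking $\k$-dimensions.

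I do not expect a genuine obstacle here; the only point requiring a little care is bookkeeping the internal degree shift — making sure the convention $S(-j)_a = S_{a-j}$ is applied consistently so that the shift lands as $j \mapsto j-1$ rather than $j \mapsto j+1$. One should also note explicitly that the hypothesis $I \neq 0$ is used only to ensure $xI \neq 0$ (so the statement is non-vacuous), and that no hypothesis on $I$ beyond being a homogeneous ideal is needed, since the argument uses nothing about $I$ except that $S$ is a domain and $x$ a homogeneous nonzerodivisor of degree $1$.
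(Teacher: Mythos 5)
Your proof is correct: the graded isomorphism $xI \cong I(-1)$ induced by multiplication by the nonzerodivisor $x$, together with the degree shift on $\Tor$, is exactly the standard argument, and your bookkeeping of the twist ($j \mapsto j-1$) is right. The paper itself offers no proof — it cites the lemma as well-known — so there is nothing to contrast; your write-up supplies precisely the expected justification.
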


\subsection*{Betti splittings} 

Betti splittings of monomial ideals were first introduced by Francisco, Ha, and Van Tuyl in \cite{FHV}, motivated by the work of Eliahou and Kervaire \cite{ek}. Betti splittings have been proven useful in studying the path ideals of graphs and their powers \cite{BCV, HV}. We recall the definition and the following results about Betti splittings, following \cite{NV}:

\begin{dfn}
Let $P, I, J$ be proper nonzero homogeneous ideals of $S$ with $P = I + J$. The decomposition $P = I +J$ is called a \emph{Betti splitting} if for all $i \ge 0$ we have $$\beta_i(P) = \beta_i(I) + \beta_i(J) + \beta_{i-1}(I \cap J).$$
\end{dfn}

\begin{dfn} Let $\varphi: M \to N$ be a morphism of finitely generated graded $S$-modules. We say that $\varphi$ is $\Tor$-vanishing if for all $i \ge 0$, we have $\Tor_{i}^S(\k,\varphi) = 0$.    
\end{dfn}
We have the following criterion of Nguyen and Vu \cite[Lemma 3.5]{NV}.
\begin{lem}\label{lem_splitting_criterion} Let $I,J$ be nonzero homogeneous ideals of $S$ and $P = I+J$. The decomposition $P = I +J$ is a Betti splitting if and only if the inclusion maps $I \cap J \to I$ and $I\cap J \to J$ are $\Tor$-vanishing.    
\end{lem}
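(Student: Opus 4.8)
The plan is to derive the statement from the Mayer--Vietoris short exact sequence attached to the decomposition $P = I + J$. First I would write down
$$0 \longrightarrow I \cap J \stackrel{\iota}{\longrightarrow} I \oplus J \stackrel{\pi}{\longrightarrow} P \longrightarrow 0,$$
where $\iota(a) = (a,a)$ and $\pi(a,b) = a - b$; this is an exact sequence of finitely generated graded $S$-modules with degree-preserving maps, and the two components of $\iota$ are precisely the inclusions $\iota_I \colon I\cap J \to I$ and $\iota_J \colon I\cap J \to J$. Applying $\Tor^S_\bullet(\k,-)$ gives the long exact sequence
$$\cdots \to \Tor^S_i(\k, I\cap J) \xrightarrow{\ \Tor^S_i(\k,\iota)\ } \Tor^S_i(\k,I)\oplus\Tor^S_i(\k,J) \to \Tor^S_i(\k,P) \to \Tor^S_{i-1}(\k,I\cap J) \to \cdots$$
I would record at the outset the elementary but crucial observation that $\Tor^S_i(\k,\iota)$ has components $\Tor^S_i(\k,\iota_I)$ and $\Tor^S_i(\k,\iota_J)$, so it vanishes if and only if both of these vanish.

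Next I would perform the standard dimension count on this long exact sequence. Since all the modules involved are finite-dimensional $\k$-vector spaces in each degree, breaking the long exact sequence into short exact pieces and using additivity of dimension yields, for every $i \ge 0$,
$$\beta_i(P) = \beta_i(I) + \beta_i(J) + \beta_{i-1}(I\cap J) - \operatorname{rank}\Tor^S_i(\k,\iota) - \operatorname{rank}\Tor^S_{i-1}(\k,\iota),$$
with the convention $\Tor^S_{-1} = 0$. Concretely, exactness at $\Tor^S_i(\k,I)\oplus\Tor^S_i(\k,J)$ and at $\Tor^S_i(\k,P)$ expresses $\beta_i(P)$ as $\bigl(\beta_i(I)+\beta_i(J) - \operatorname{rank}\Tor^S_i(\k,\iota)\bigr) + \bigl(\beta_{i-1}(I\cap J) - \operatorname{rank}\Tor^S_{i-1}(\k,\iota)\bigr)$. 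If one wants the finer graded statement, the same computation runs verbatim in each internal degree $j$, since every map in the sequence is homogeneous.

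Finally I would read off the equivalence. By the displayed formula, the defining equality $\beta_i(P) = \beta_i(I) + \beta_i(J) + \beta_{i-1}(I\cap J)$ holds for a given $i$ if and only if $\operatorname{rank}\Tor^S_i(\k,\iota) + \operatorname{rank}\Tor^S_{i-1}(\k,\iota) = 0$; demanding it for all $i \ge 0$ and running over $i = 0,1,2,\dots$ in turn (using $\Tor^S_{-1} = 0$) forces $\Tor^S_i(\k,\iota) = 0$ for all $i$, and conversely this vanishing trivially gives the equalities. Combined with the opening observation, $P = I+J$ is a Betti splitting if and only if $\Tor^S_i(\k,\iota_I) = 0$ and $\Tor^S_i(\k,\iota_J) = 0$ for all $i \ge 0$, i.e. both inclusion maps are $\Tor$-vanishing. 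I do not anticipate a genuine obstacle; the only points that need a little care are identifying the first maps in the Mayer--Vietoris sequence with the pair of inclusions and the bookkeeping that converts the long exact sequence into the rank formula above.
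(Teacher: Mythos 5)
Your argument is correct. There is no in-paper proof to compare against: the paper simply quotes this statement as \cite[Lemma 3.5]{NV}, and your proof --- the short exact sequence $0 \to I\cap J \to I\oplus J \to P \to 0$, the induced long exact sequence in $\Tor^S(\k,-)$, and the resulting count $\beta_i(P)=\beta_i(I)+\beta_i(J)+\beta_{i-1}(I\cap J)-\operatorname{rank}\Tor^S_i(\k,\iota)-\operatorname{rank}\Tor^S_{i-1}(\k,\iota)$ --- is exactly the standard derivation of that criterion; the identification of $\Tor^S_i(\k,\iota)$ with the pair of maps induced by the two inclusions is legitimate because $\Tor$ commutes with finite direct sums. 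One small simplification: since ranks are nonnegative, each single equality $\beta_i(P)=\beta_i(I)+\beta_i(J)+\beta_{i-1}(I\cap J)$ already forces both $\Tor^S_i(\k,\iota)=0$ and $\Tor^S_{i-1}(\k,\iota)=0$, so the ``running over $i=0,1,2,\dots$ in turn'' step is unnecessary.
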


\begin{obs}
    By \cite[Proposition 2.1]{FHV}, once we have $P = I + J$ is a Betti splitting, then the mapping cone construction for the map $I \cap J \to I \oplus J$ yields a minimal free resolution of $P$. In particular, we have 
    $$\beta_{i,j}(P) = \beta_{i,j}(I) + \beta_{i,j}(J) + \beta_{i-1,j}(I \cap J)$$
    for all non-negative integers $i,j$.
\end{obs}

If $u\in S$ is a monomial, the \emph{support} of $u$, denoted by $\supp (u)$ is the set of variables $x_i$ such that $x_i | u$. Also, if $J\subset S$ is a monomial ideal with the minimal generating set $G(J)=\{u_1,\ldots,u_m\}$, 
the \emph{support} of $J$ is $\supp(J)=\bigcup_{i=1}^m \supp(u_i)$. The following results are well-known, see e.g. \cite{HH}.

\begin{lem}\label{lem_colon_sum}  Let $I,J,L$ be monomial ideals and $f$ a monomial of $S$. Then 
\begin{enumerate}
    \item $(I+J) : f = (I : f) + (J:f)$.
    \item $(I + J) \cap L = (I \cap L) + (J \cap L)$.
    \item If $\supp (I) \cap \supp (J) = \emptyset$ then $I\cap J = IJ$.
    \item If $f = xg$ where $x$ is a variable and $x \notin \supp (I)$ then $I:f = I:g$.
\end{enumerate}
\end{lem}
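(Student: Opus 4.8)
\textbf{Proof plan for Lemma \ref{lem_colon_sum}.}

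All four statements are standard facts about monomial ideals, and the plan is to prove each by reducing to monomials via the fact that a monomial ideal, together with all colons, sums, and intersections of monomial ideals, is again a monomial ideal with an explicitly describable monomial generating set. Recall the two basic dictionary entries: if $I$ has monomial generators $u_1,\dots,u_r$, then $I:f$ is generated by $u_1/\gcd(u_1,f),\dots,u_r/\gcd(u_r,f)$; and if $J$ has monomial generators $v_1,\dots,v_s$, then $I\cap J$ is generated by the monomials $\lcm(u_i,v_j)$ for $1\le i\le r$, $1\le j\le s$. Throughout I will use that two monomial ideals are equal iff they contain the same monomials, and that a monomial $w$ lies in a monomial ideal $I$ iff some generator $u_i$ divides $w$.

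For part (1), fix a monomial $w$. Then $w\in (I+J):f$ iff $wf\in I+J$ iff $wf\in I$ or $wf\in J$ (since $I+J$ is a monomial ideal and $wf$ is a monomial, membership is detected by a single generator, which lies in $G(I)$ or in $G(J)$), iff $w\in (I:f)$ or $w\in(J:f)$, iff $w\in (I:f)+(J:f)$. Since both sides are monomial ideals agreeing on all monomials, they are equal. For part (2), fix a monomial $w$ and note $w\in (I+J)\cap L$ iff $w\in L$ and ($w\in I$ or $w\in J$), iff ($w\in I\cap L$) or ($w\in J\cap L$), iff $w\in (I\cap L)+(J\cap L)$; again both sides are monomial ideals, so they coincide. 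For part (3), use the lcm description of the intersection: $I\cap J$ is generated by $\lcm(u,v)$ with $u\in G(I)$, $v\in G(J)$; the disjoint-support hypothesis means $\gcd(u,v)=1$, hence $\lcm(u,v)=uv$, so $I\cap J$ is generated by $\{uv: u\in G(I),\,v\in G(J)\}$, which is exactly a generating set of the product $IJ$; therefore $I\cap J = IJ$. (One checks $IJ\subseteq I\cap J$ always, and the reverse inclusion is what the generator computation gives.) For part (4), write $f=xg$ with $x$ a variable not dividing any generator of $I$; then for each generator $u_i$ of $I$ we have $\gcd(u_i,f)=\gcd(u_i,g)$ because $x\nmid u_i$ forces the $x$-exponent of $\gcd(u_i,f)$ to be $0$, matching $\gcd(u_i,g)$, while all other variable exponents of $f$ and $g$ agree; hence $u_i/\gcd(u_i,f)=u_i/\gcd(u_i,g)$ for every $i$, and the two colon ideals have identical generating sets, so $I:f = I:g$.

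There is no real obstacle here: the only point requiring a little care is making sure the "membership detected by a single generator" principle is invoked correctly — that is, that for a monomial ideal $A$ and a monomial $w$, $w\in A$ iff some element of $G(A)$ divides $w$ — and that this principle is what licenses the case splits in (1) and (2). I would state that principle once at the outset (it is immediate from writing an arbitrary element of $A$ as a polynomial combination of generators and comparing monomials) and then apply it uniformly. Parts (3) and (4) instead rest on the explicit lcm- and colon-generator formulas, which I would likewise recall at the start; with those in hand the verifications are the short exponent computations indicated above.
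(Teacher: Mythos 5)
Your proof is correct. The paper itself offers no argument for this lemma --- it is stated as well-known with a reference to Herzog--Hibi --- and what you give is exactly the standard verification one finds there: reduce every identity to a statement about monomials via the principle that a monomial lies in a monomial ideal iff some monomial generator divides it (which handles (1) and (2), the nontrivial inclusions being the only ones needing this), and use the explicit generator formulas $I:f=(u_i/\gcd(u_i,f))$ and $I\cap J=(\lcm(u_i,v_j))$ for (3) and (4), where the disjoint-support and $x\notin\supp(I)$ hypotheses make the gcd computations collapse as you describe. No gaps; the only implicit dependencies are those two generator formulas, which are themselves standard and which you correctly flag as the facts to recall at the outset.
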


\section{Graded Betti numbers of powers of $m$-path ideals of $n+m$-paths}\label{sec_betti}

In this section, we compute all the graded Betti numbers of powers of $m$-path ideals of $n+m$-paths. We first introduce some notations. Let $n \ge 0$, $m \ge 2$ be integers and $S = \k[x_1,\ldots,x_{n+m}]$. We denote by $f_1 = x_1 \cdots x_{m}, \ldots, f_{n+1} = x_{n+1} \cdots x_{n+m}$. Then the $m$-path ideal of the $n+m$-path is 
$$I_{n,m} = (f_1,\ldots,f_{n+1}).$$
First, we have 
\begin{lem}\label{lem_colon_1} Let $t > s \ge 0$ be natural numbers. Then $I_{n,m}^t : f_{n+1}^s = I_{n,m}^{t-s}$.    
\end{lem}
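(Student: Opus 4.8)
The plan is to prove the statement by induction on $s$, the case $s = 0$ being trivial. The heart of the matter is the case $s = 1$: showing $I_{n,m}^t : f_{n+1} = I_{n,m}^{t-1}$ for every $t \ge 1$. Once this is established, for $s \ge 2$ and $t > s$ we write $I_{n,m}^t : f_{n+1}^s = (I_{n,m}^t : f_{n+1}^{s-1}) : f_{n+1}$, apply the induction hypothesis (note $t > s-1$) to get $I_{n,m}^{t-s+1} : f_{n+1}$, and then apply the base case again (valid since $t - s + 1 \ge 1$) to obtain $I_{n,m}^{t-s}$, as desired.

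For the containment $I_{n,m}^{t-1} \subseteq I_{n,m}^t : f_{n+1}$, simply note $f_{n+1} \in I_{n,m}$, so $f_{n+1} \cdot I_{n,m}^{t-1} \subseteq I_{n,m}^t$. For the reverse containment, since everything in sight is a monomial ideal, it suffices to take a monomial $u$ with $u f_{n+1} \in I_{n,m}^t$ and show $u \in I_{n,m}^{t-1}$. Write $u f_{n+1}$ as divisible by a product $f_{j_1} \cdots f_{j_t}$ of $t$ (not necessarily distinct) generators. The key combinatorial observation is that $f_{n+1} = x_{n+1} \cdots x_{n+m}$ is, among all the $f_j$, the unique generator divisible by $x_{n+m}$, and more generally the generators $f_j$ with large index are the only ones supported near the right end of the path. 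I would argue that the exponent of $x_{n+m}$ in $f_{j_1}\cdots f_{j_t}$ is exactly the number of indices $j_k$ equal to $n+1$; combined with $\deg_{x_{n+m}}(u f_{n+1}) = \deg_{x_{n+m}}(u) + 1$, this forces at least one $j_k = n+1$, so that $f_{j_1}\cdots f_{j_t}/f_{n+1}$ is a product of $t-1$ generators dividing $u$, giving $u \in I_{n,m}^{t-1}$.

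The main obstacle is making the divisibility bookkeeping rigorous: from $u f_{n+1} \in I_{n,m}^t$ one only knows that \emph{some} product of $t$ generators divides $u f_{n+1}$, and one must extract from such a product one factor equal to $f_{n+1}$ in a way that the remaining $t-1$ factors still divide $u$. The cleanest route is to pick, among all length-$t$ products $f_{j_1}\cdots f_{j_t}$ dividing $u f_{n+1}$, one that maximizes the number of factors equal to $f_{n+1}$, and then show this number is positive by the $x_{n+m}$-degree count above; dividing out one such factor and checking the result divides $u$ (rather than merely $u f_{n+1}$) uses that the remaining factors, being distinct-or-not from $f_{n+1}$, contribute the right $x_{n+m}$-exponent. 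Alternatively, one can invoke Lemma \ref{lem_colon_sum}(1) to reduce $I_{n,m}^t : f_{n+1}$ to a sum of colon ideals of the generators of $I_{n,m}^t$ by $f_{n+1}$ and analyze each term, but the direct monomial argument is likely shortest.
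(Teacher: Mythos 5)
Your overall scaffolding is sound: the reduction of general $s$ to the case $s=1$ by iterated colons is valid, and the final cancellation step is also fine (if some product $f_{j_1}\cdots f_{j_t}$ dividing $uf_{n+1}$ has a factor equal to $f_{n+1}$, then writing the product as $f_{n+1}Q$ gives $Q\mid u$, so $u\in I_{n,m}^{t-1}$). The gap is the pivotal claim that such a factor must exist. Your $x_{n+m}$-degree count goes the wrong way: since $f_{n+1}$ is the unique generator involving $x_{n+m}$, the count only says that the number of indices $j_k$ equal to $n+1$ is at most $\deg_{x_{n+m}}(u)+1$; it gives no lower bound at all. For example, with $u=f_1^t$ the product $f_1^t$ divides $uf_{n+1}$ and contains no factor $f_{n+1}$. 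Your proposed repair (choose a product maximizing the number of $f_{n+1}$-factors and ``show this number is positive by the degree count'') therefore cannot work as stated: positivity of that maximum is not a degree statement, and proving that \emph{some} length-$t$ product dividing $uf_{n+1}$ contains $f_{n+1}$ as a factor is essentially equivalent to the hard inclusion you are trying to prove, so the key step is still missing.

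The idea that closes the argument --- and is how the paper proceeds --- is to drop the insistence that the removed factor be $f_{n+1}$. Given any product $w=f_{j_1}\cdots f_{j_t}$ dividing $uf_{n+1}^{s}$, let $j^{*}=\max_k j_k$, so that $x_{j^{*}+m-1}$ is the largest variable occurring in $w$, and remove the factor $f_{j^{*}}$. For each variable $x_a$ with $n+1\le a\le j^{*}+m-1$ the factor $f_{j^{*}}$ accounts for one copy of $x_a$ coming from $f_{n+1}$, while the leftover part $f_{n+1}/\gcd(f_{n+1},f_{j^{*}})$ is supported on variables of index greater than $j^{*}+m-1$ and hence is coprime to $w/f_{j^{*}}$; a variable-by-variable exponent comparison then shows that $w/f_{j^{*}}$ divides $u$ when $s=1$ (and divides $uf_{n+1}^{s-1}$ in general, which is how the paper runs a single induction on $s$). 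This ``remove the generator of largest index'' step is exactly what your proposal lacks; with it, your induction on $s$ goes through, but as written the key divisibility is not established.
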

\begin{proof}
    The proof is similar to that of \cite[Lemma 2.1]{BC1}. For completeness, we give an argument here. Since $f_{n+1} \in I_{n,m}$, the left-hand side contains the right-hand side. Let $u$ be a monomial in $I_{n,m}^t:f_{n+1}^s$. We prove by induction on $s$ that $u \in I_{n,m}^{t-s}$. The base case $s = 0$ is obvious. Now assume that the statement holds for $s-1$. Since $u \in I_{n,m}^{t} : f_{n+1}^s$, there exists a minimal generator $w$ of $I_{n,m}^t$ such that $w | u f_{n+1}^s$. Let $k = \max \{j \mid x_j \in \supp (w)\}$. Since $w$ is a minimal generator of $I_{n,m}^t$, it follows that $f_{k-m+1} | w$, $w = f_{k-m+1} w'$ where $w'$ is a minimal generator of $I_{n,m}^{t-1}$, and $\supp (w') \subseteq \supp (w)$. In particular, we have 
    $$w' | f_{n+1}^{s-1} \cdot \frac{f_{n+1}}{\gcd(f_{n+1},f_{k-m+1})} u.$$
    Since $\supp(w') \cap \supp \left ( \frac{f_{n+1}}{\gcd(f_{n+1},f_{k-m+1})} \right ) = \emptyset$, we deduce that $w' | f_{n+1}^{s-1} u$. By induction, the conclusion follows.
\end{proof}

For simplicity of notations, in the next four lemmas, we set
$I = (f_1,\ldots,f_{n+1})$, $J = (f_1, \ldots, f_n)$, $A = (f_1, \ldots, f_{n-m})$ and $B = (g_{n-m+1}, \ldots,g_n)$ where $g_i = f_i/x_n$. In particular, we have 
\begin{align*}
    I &= J + (f_{n+1})\\
    J & = A + x_n B
\end{align*}

\begin{lem}\label{lem_colon_2} Assume that $t \ge s \ge 1$ be natural numbers. Then 
$$J^t : f_{n+1}^s = A^{t-s+1} (A + (x_n))^{s-1} + x_n^{s} J^{t-s}.$$
\end{lem}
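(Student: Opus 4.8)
The plan is to prove the identity
$$J^t : f_{n+1}^s = A^{t-s+1}(A+(x_n))^{s-1} + x_n^s J^{t-s}$$
by induction on $s$, using Lemma \ref{lem_colon_sum} to distribute colons over sums and the structure $J = A + x_n B$ together with $f_{n+1} = x_{n+1}\cdots x_{n+m}$ having support disjoint from the $f_i$ with $i \le n-m$. The key structural observation is that $\gcd(f_{n+1}, f_i)$ behaves in one of two ways: for $i \le n-m$ the supports are disjoint, so $f_i : f_{n+1}^s = f_i$; while for $n-m+1 \le i \le n$ we have $x_n \mid f_{n+1}$ overlapping with $f_i$ (indeed $f_i = x_n g_i$ and the only common variable of $f_i$ and $f_{n+1}$ is $x_n$), so colon by $f_{n+1}$ strips exactly the factor $x_n$. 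This is the combinatorial heart of why the $x_n$-adic filtration of $J$ appears.

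First I would handle the base case $s = 1$: show $J^t : f_{n+1} = A^t + x_n J^{t-1}$. Here I would expand $J^t = (A + x_n B)^t = \sum_{k=0}^t A^{t-k} x_n^k B^k$ and take the colon termwise (using part (1) of Lemma \ref{lem_colon_sum}). The $k = 0$ term $A^t$ has support disjoint from $f_{n+1}$ so survives unchanged as $A^t$; for $k \ge 1$, since each generator of $x_n B$ is an $f_i$ with $n-m+1\le i\le n$ whose only common variable with $f_{n+1}$ is $x_n$, colon by $f_{n+1}$ removes one factor of $x_n$ and nothing else — I would make this precise by a direct monomial computation showing $(x_n^k B^k A^{t-k}) : f_{n+1} = x_n^{k-1} B^k A^{t-k}$, and then note $x_n^{k-1}B^k A^{t-k} \subseteq x_n J^{t-1}$ for $k\ge 1$ while the sum of all these for $k\ge 1$ recovers exactly $x_n(A+x_nB)^{t-1} = x_n J^{t-1}$. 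Combining, $J^t : f_{n+1} = A^t + x_n J^{t-1}$, which matches the claimed formula at $s=1$ since $A^{t}(A+(x_n))^0 = A^t$.

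For the inductive step, assuming the formula for $s-1$, I would write $J^t : f_{n+1}^s = (J^t : f_{n+1}^{s-1}) : f_{n+1}$ and substitute the inductive expression $A^{t-s+2}(A+(x_n))^{s-2} + x_n^{s-1}J^{t-s+1}$, then apply the colon to each summand. The term $x_n^{s-1} J^{t-s+1} : f_{n+1}$ reduces via the base-case computation (after pulling out powers of $x_n$, using that $x_n \nmid f_{n+1}$'s "new" part appropriately — more precisely, $x_n^{s-1}J^{t-s+1} : f_{n+1} = x_n^{s-1}(J^{t-s+1} : f_{n+1}) = x_n^{s-1}(A^{t-s+1} + x_n J^{t-s}) = x_n^{s-1}A^{t-s+1} + x_n^s J^{t-s}$). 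The term $A^{t-s+2}(A+(x_n))^{s-2} : f_{n+1}$: since $A$ has support disjoint from $f_{n+1}$ and the only interaction of $(A+(x_n))$ with $f_{n+1}$ is through $x_n$, one expands $(A+(x_n))^{s-2} = \sum_j \binom{s-2}{j} A^{s-2-j}(x_n^j)$-type decomposition and finds colon by $f_{n+1}$ again strips one $x_n$ from the pure-$x_n$ parts, yielding $A^{t-s+2}(A+(x_n))^{s-2}$ contributes $A^{t-s+1}\cdot A(A+(x_n))^{s-2} : f_{n+1}$; the bookkeeping should collapse to $A^{t-s+1}(A+(x_n))^{s-1} + x_n^{s-1}A^{t-s+1}$ (the extra term coming from the new $x_n$-monomial generated). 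Then I would sum everything and check that $A^{t-s+1}(A+(x_n))^{s-1} + x_n^{s-1}A^{t-s+1} + x_n^{s-1}A^{t-s+1} + x_n^s J^{t-s}$ simplifies — absorbing $x_n^{s-1}A^{t-s+1}$ into $A^{t-s+1}(A+(x_n))^{s-1}$ since $x_n^{s-1} \in (A+(x_n))^{s-1}$ — to exactly $A^{t-s+1}(A+(x_n))^{s-1} + x_n^s J^{t-s}$.

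The main obstacle I anticipate is the bookkeeping in the inductive step: correctly tracking how colon by $f_{n+1}$ acts on the mixed products $A^a (A+(x_n))^b$ and verifying the containments go both ways (the $\supseteq$ direction, that every claimed generator lies in $J^t : f_{n+1}^s$, is usually the easy direction via $f_{n+1}^s \cdot (\text{generator}) \in J^t$; the $\subseteq$ direction needs the argument that colon-by-$f_{n+1}$ can only remove $x_n$'s, analogous to the end of the proof of Lemma \ref{lem_colon_1}). To make this rigorous I would likely prove a small auxiliary claim: for any monomial ideal $L$ with $\supp(L) \subseteq \{x_1,\ldots,x_{n-1}\}$ (or more precisely avoiding $x_{n+1},\ldots,x_{n+m}$) and any $a \ge 0$, one has $(x_n^a L) : f_{n+1} = x_n^{\max(a-1,0)} L$ — this isolates the one real computation and then the rest is formal manipulation with part (1) of Lemma \ref{lem_colon_sum} and induction.
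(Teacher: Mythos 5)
There is a genuine error at the heart of your computation: you assert that the only common variable of $f_i$ (for $n-m+1\le i\le n$) and $f_{n+1}$ is $x_n$, and hence that colon by $f_{n+1}$ ``strips exactly one factor of $x_n$''. But $x_n\nmid f_{n+1}=x_{n+1}\cdots x_{n+m}$; the overlap between $f_{n+1}$ and the generators of $x_nB$ is through $g_n=x_{n+1}\cdots x_{n+m-1}$, not through $x_n$. Consequently your key identities are false. For $m=2$, $t=1$, $k=1$ one has $(x_nB):f_{n+1}=(x_{n-1}x_n,\,x_nx_{n+1}):(x_{n+1}x_{n+2})=(x_n)$, not $x_n^{0}B=(x_{n-1},x_{n+1})$; so $(x_n^kB^kA^{t-k}):f_{n+1}\ne x_n^{k-1}B^kA^{t-k}$, and your sum $\sum_{k\ge 1}x_n^{k-1}B^kA^{t-k}$ is neither contained in $J^t:f_{n+1}$ nor equal to $x_nJ^{t-1}$ (already $BA^{t-1}\not\subseteq x_nJ^{t-1}$). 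Your auxiliary claim $(x_n^aL):f_{n+1}=x_n^{\max(a-1,0)}L$ is likewise false: since $x_n$ and $\supp(L)$ are disjoint from $\supp(f_{n+1})$, that colon leaves $x_n^aL$ unchanged. The same misconception infects the inductive step: $\bigl(A^{t-s+2}(A+(x_n))^{s-2}\bigr):f_{n+1}$ equals $A^{t-s+2}(A+(x_n))^{s-2}$ itself (disjoint supports); it does not ``collapse'' to the larger ideal $A^{t-s+1}(A+(x_n))^{s-1}+x_n^{s-1}A^{t-s+1}$.

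What is missing is the actual mechanism the paper uses: write $f_{n+1}=x_{n+m}g_n$ with $x_{n+m}\notin\supp(J)$, so $J^t:f_{n+1}^s=J^t:g_n^s$; expand $J^t=\sum_i A^i(x_nB)^{t-i}$; and compute $B^{t-i}:g_n^s=B^{t-i-s}$ (the unit ideal when $t-i<s$) by observing that $B$ is isomorphic to the $(m-1)$-path ideal of a $(2m-2)$-path with $g_n$ as its last generator, so Lemma \ref{lem_colon_1} applies. The colon removes factors of $B$, while the powers of $x_n$ pass through untouched; regrouping the two ranges of $i$ gives $x_n^sJ^{t-s}+A^{t-s+1}(A+(x_n))^{s-1}$. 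Your induction-on-$s$ skeleton could in principle be repaired: once the case $s=1$, namely $J^r:f_{n+1}=A^r+x_nJ^{r-1}$, is established, the step works formally using $(x_n^{s-1}M):f_{n+1}=x_n^{s-1}(M:f_{n+1})$ and the identity $A^{t-s+1}(A+(x_n))^{s-1}=A^{t-s+2}(A+(x_n))^{s-2}+x_n^{s-1}A^{t-s+1}$. But the base case itself still requires exactly the path-ideal colon argument above, which your proposal replaces with a false local computation, so as written the proof does not go through.
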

\begin{proof} Since $f_{n+1} = x_{n+m} g$ where $g = g_n = x_{n+1} \cdots x_{n+m-1}$ and $x_{n+m} \notin \supp (J)$, by Lemma \ref{lem_colon_sum}, we deduce that $J^t : f_{n+1}^s = J^t : g^s$. Now, we have
$$J^t = A^t + A^{t-1} (x_nB) + \cdots + (x_nB)^t.$$
Since $\supp (g) \cap \supp (A) = \emptyset$, $g$ is a minimal generator of $B$, and $B$ is isomorphic to the $m-1$-path ideal of the $2m-2$-path, by Lemma \ref{lem_colon_sum} and Lemma \ref{lem_colon_1}, we deduce that 
$$A^i (x_n B)^{t-i} : g^s = \begin{cases}
    A^i x_n^{t-i} B^{t-i-s} & \text{ if } i \le t-s,\\
    A^i x_n^{t-i} & \text { if } i > t - s.    
\end{cases}$$
By Lemma \ref{lem_colon_sum}, we have 
\begin{align*}
    J^t : g^s &= \sum_{i=0}^t (A^i (x_nB)^{t-i} : g^s)\\
    &= \sum_{i = 0}^{t-s} A^i x_n^{t-i} B^{t-i - s} + \sum_{i=t-s+1}^{t} A^i x_n^{t-i} \\
    &= x_n^{s} (A+ x_nB)^{t-s} + A^{t-s+1} (A + (x_n))^{s-1}.
\end{align*}
The conclusion follows.
\end{proof}

\begin{lem}\label{lem_decomp1} Assume that $t \ge 1$ and $s \ge 0$ are integers. Then 
    $$J^s f_{n+1}^t \cap J^{s+1} I^{t-1} = f_{n+1}^t J^s ( A + (x_n))$$
    and the decomposition $J^s I^t = J^s f_{n+1}^t + J^{s+1} I^{t-1}$ is a Betti splitting. 
\end{lem}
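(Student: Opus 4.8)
The plan is to establish, in turn, the sum decomposition $J^sI^t = J^sf_{n+1}^t + J^{s+1}I^{t-1}$, then the intersection formula, and finally the Betti splitting property via the criterion of Lemma \ref{lem_splitting_criterion}. The sum decomposition is immediate: expanding $I = J + (f_{n+1})$ gives $I^t = \sum_{k=0}^{t} f_{n+1}^kJ^{t-k}$ as monomial ideals, hence $J^sI^t = \sum_{k=0}^{t} f_{n+1}^kJ^{s+t-k}$; the $k = t$ summand is $J^sf_{n+1}^t$, each summand with $0 \le k \le t-1$ lies in $J^{s+1}I^{t-1}$ (write it as $J^{s+1}\cdot f_{n+1}^kJ^{t-1-k}$), and conversely $J^{s+1}I^{t-1} \subseteq J^sI^t$, so the two sides coincide.

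For the intersection I would argue with monomials. A monomial $w$ lies in $J^sf_{n+1}^t$ exactly when $w = f_{n+1}^t v$ for some monomial $v \in J^s$, and then $w \in J^{s+1}I^{t-1}$ exactly when $v \in J^{s+1}I^{t-1} : f_{n+1}^t$; hence $J^sf_{n+1}^t \cap J^{s+1}I^{t-1} = f_{n+1}^t\bigl(J^s \cap (J^{s+1}I^{t-1}:f_{n+1}^t)\bigr)$, and it remains to show $J^s \cap (J^{s+1}I^{t-1}:f_{n+1}^t) = J^s(A+(x_n))$. To evaluate the colon, expand $I^{t-1} = \sum_{k=0}^{t-1} f_{n+1}^kJ^{t-1-k}$, so that $J^{s+1}I^{t-1}:f_{n+1}^t = \sum_{k=0}^{t-1}\bigl(J^{s+t-k}:f_{n+1}^{t-k}\bigr)$, and apply Lemma \ref{lem_colon_2} to each summand to obtain $A^{s+1}(A+(x_n))^{t-k-1} + x_n^{t-k}J^s$. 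As $k$ ranges over $0,\dots,t-1$ the term for $k = t-1$, namely $A^{s+1} + x_nJ^s$, contains all the others, so the sum collapses and $J^{s+1}I^{t-1}:f_{n+1}^t = A^{s+1} + x_nJ^s$. Finally I would check the identity $J^s \cap (A^{s+1}+x_nJ^s) = J^s(A+(x_n))$ directly: the inclusion of $J^s(A+(x_n)) = AJ^s + x_nJ^s$ into the left-hand side follows from $A^{s+1} \subseteq AJ^s$ and, using the binomial expansion $J^s = \sum_i A^i(x_nB)^{s-i}$ together with $A \subseteq J$, $x_nB \subseteq J$ and $A^{i+1}B \subseteq A^{i+1}$, from $AJ^s \subseteq A^{s+1} + x_nJ^s$; the reverse inclusion is immediate because $A^{s+1}+x_nJ^s$ is a monomial ideal and any monomial generator of it that also lies in $J^s$ is either divisible by $x_n$, hence in $x_nJ^s$, or lies in $A^{s+1} \subseteq AJ^s$.

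For the Betti splitting I would apply Lemma \ref{lem_splitting_criterion}: it suffices to show that the inclusions $f_{n+1}^tJ^s(A+(x_n)) \hookrightarrow J^sf_{n+1}^t$ and $f_{n+1}^tJ^s(A+(x_n)) \hookrightarrow J^{s+1}I^{t-1}$ are $\Tor$-vanishing. For the first, multiplication by the common monomial factor $f_{n+1}^t$ is a degree-shift isomorphism compatible with both inclusions (the mechanism behind Lemma \ref{lem_mul_x}), so the problem reduces to the $\Tor$-vanishing of $J^s(A+(x_n)) \hookrightarrow J^s$. For the second, I would rewrite the intersection, using the identity $x_nf_{n+1} = x_{n+m}f_n$ together with $x_{n+m} \notin \supp J$, as $f_{n+1}^tAJ^s + x_{n+m}f_{n+1}^{t-1}f_nJ^s$, and compare it with $J^{s+1}I^{t-1} \supseteq f_{n+1}^{t-1}J^{s+1}$. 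In both cases the verification is completed by the $\Tor$-vanishing criteria of \cite{NV}, as in \cite{BCV}.

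The step I expect to be the main obstacle is this last one: producing the two inclusions in precisely the form to which the $\Tor$-vanishing criteria of \cite{NV} apply. This is where the combinatorial peculiarities of $I_{n,m}$ — the decompositions $I = J + (f_{n+1})$ and $J = A + x_nB$, the identity $x_nf_{n+1} = x_{n+m}f_n$, and the fact that $x_{n+m}$ and $x_n$ sit outside the relevant supports — are genuinely needed. The intersection computation is less delicate: apart from the careful bookkeeping in applying Lemma \ref{lem_colon_2} termwise and observing that the resulting sum collapses, it is routine.
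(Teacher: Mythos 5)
Your ideal-theoretic part is correct and follows essentially the paper's route: the sum decomposition, the reduction of the intersection to $f_{n+1}^t\bigl(J^s \cap (J^{s+1}I^{t-1}:f_{n+1}^t)\bigr)$, the expansion of the colon via Lemma \ref{lem_colon_2}, and the observation that the sum collapses to $A^{s+1}+x_nJ^s$ are all sound (you compute the colon exactly, where the paper only needs one containment, and the identity $J^s(A+(x_n))=A^{s+1}+x_nJ^s$ that you re-derive is the first assertion of Lemma \ref{lem_decomp2}).

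The genuine gap is in the Betti splitting part, exactly where you yourself predicted an obstacle: neither $\Tor$-vanishing statement is actually proved; both are deferred to unspecified ``criteria of \cite{NV}'', and this deferral is not harmless. For $J^s(A+(x_n)) \hookrightarrow J^s$, the most direct sufficient condition in \cite{NV} (that $u/x$ lies in the target for every minimal generator $u$ and every variable $x$ dividing $u$) fails: for a generator $x_nu$ with $u \in G(J^s)$, removing a variable of $u$ gives $x_n(u/x_i)$, which for $n>m$ is generally not in $J^s$. The paper's actual argument is two-layered: it uses the auxiliary Betti splitting $J^s(A+(x_n)) = A^{s+1}+x_nJ^s$ of Lemma \ref{lem_decomp2} to reduce to the maps $x_nJ^s\to J^s$ (multiplication by a variable, hence $\Tor$-vanishing) and $AJ^s\to J^s$, the latter handled by factoring through $J^{s+1}\to J^s$ and invoking \cite[Theorem 4.5]{NV}; none of this is in your sketch. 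For $f_{n+1}^tJ^s(A+(x_n)) \hookrightarrow J^{s+1}I^{t-1}$, the paper argues by multidegree: every syzygy of the intersection has $x_{n+m}$-degree $t$, while those of $J^{s+1}I^{t-1}$ have $x_{n+m}$-degree at most $t-1$, forcing the map on $\Tor$ to vanish. Your rewriting $f_{n+1}^tAJ^s + x_{n+m}f_{n+1}^{t-1}f_nJ^s$ does not by itself finish this: $\Tor$-vanishing of each summand into the target does not yield $\Tor$-vanishing of their sum. A completion along your lines does exist --- factor $x_{n+m}$ out of the whole intersection, writing it as $x_{n+m}\cdot f_{n+1}^{t-1}g_nJ^s(A+(x_n))$ with $g_n=f_{n+1}/x_{n+m}$, check $f_{n+1}^{t-1}g_nJ^s(A+(x_n))\subseteq J^{s+1}I^{t-1}$ using $x_ng_n=f_n$, and use that multiplication by the variable $x_{n+m}$ induces zero on $\Tor_i(\k,-)$ --- but you did not carry it out, and these verifications are precisely the content of the lemma beyond the ideal arithmetic.
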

\begin{proof}    We have $I^t = f_{n+1}^t + J I^{t-1}$. Hence, $J^s I^t = J^{s} f_{n+1}^t + J^{s+1} I^{t-1}$. We now prove the equality for the intersection. 

First, we have $f_{n+1} x_n \in J$ and $A \subseteq J$, hence 
$f_{n+1} (A + (x_n)) \subseteq J$. In particular, the left-hand side contains the right-hand side. 

It remains to show that 
\begin{equation}\label{eq_3_1}
    J^s \cap (J^{s+1} I^{t-1} : f_{n+1}^t) \subseteq J^s ( A + (x_n)).
\end{equation}
By Lemma \ref{lem_colon_sum}, we have
\begin{equation}\label{eq_3_2}
J^{s+1} I^{t-1} : f_{n+1}^t = \sum_{j=0}^{t-1} (J^{s+1 + j} f_{n+1}^{t-1-j}) : f_{n+1}^t =\sum_{j=0}^{t-1} (J^{s+1+j} : f_{n+1}^{j+1}).    
\end{equation}
By Lemma \ref{lem_colon_sum} and Lemma \ref{lem_colon_2}, we have 
\begin{align*}
    J^s \cap (J^{s+j+1} : f_{n+1}^{j+1}) & =  J^s \cap (x_n^{j+1} J^{s} + A^{s+1}(A+x_n)^{j}) \\
    &\subseteq x_n J^s  + A^{s+1} \subseteq J^s(A + (x_n) ).
\end{align*}
Together with Eq. \eqref{eq_3_2} and Lemma \ref{lem_colon_sum}, Eq. \eqref{eq_3_1} follows.

It remains to prove that the decomposition $J^s I^t = J^s f^t + J^{s+1} I^{t-1}$ is a Betti splitting. First, we have any non-zero syzygy of $f_{n+1}^t J^s(A + (x_n))$ has $x_{n+m}$-degree $t$, while any non-zero syzygy of $J^{s+1} I^{t-1}$ has $x_{n+m}$-degree at most $t- 1$. Hence, the inclusion map $f_{n+1}^t J^s(A + (x_n)) \to J^{s+1} I^{t-1}$ is $\Tor$-vanishing. By Lemma \ref{lem_decomp2} and Lemma \ref{lem_splitting_criterion}, it suffices to prove that the inclusion maps $J^s x_n \to J^s$ and $J^s A \to J^s$ are $\Tor$-vanishing. The first one is clear, the second one follows from \cite[Theorem 4.5]{NV} and the fact that the map $J^s A \to J^s$ factors through $J^s A \to J^{s+1} \to J^s$.
\end{proof}

\begin{lem}\label{lem_decomp2} We have $J^s(A + (x_n)) = A^{s+1} + x_n J^s$, $A^{s+1} \cap x_n J^s = x_n A^{s+1}$, and the decomposition $J^s(A + (x_n) = A^{s+1} + x_nJ^s$ is a Betti splitting.    
\end{lem}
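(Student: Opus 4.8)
The plan is to verify three assertions in turn: the algebra identity $J^s(A+(x_n)) = A^{s+1} + x_n J^s$, the intersection formula $A^{s+1} \cap x_n J^s = x_n A^{s+1}$, and finally that the resulting sum is a Betti splitting. For the first identity, I would expand $J^s(A+(x_n)) = J^s A + x_n J^s$ and then use the fact that $J = A + x_n B$ implies $J^s A = (A + x_n B)^s A$; collecting terms, every summand either is divisible by $x_n$ (hence lies in $x_n J^s$) or is a pure power $A^s \cdot A = A^{s+1}$, which gives $J^s A \subseteq A^{s+1} + x_n J^s$. The reverse containment is immediate since $A^{s+1} = A^s \cdot A \subseteq J^s A$ and $x_n J^s \subseteq J^s(A+(x_n))$. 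This step is routine bookkeeping with the binomial-type expansion of $(A+x_nB)^s$.

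**The intersection.** For $A^{s+1} \cap x_n J^s = x_n A^{s+1}$, the containment $\supseteq$ is clear. For $\subseteq$, I would argue monomial-wise: since $x_n \notin \supp(A)$, a monomial $u \in A^{s+1}$ has $x_n$-degree $0$ in the factor coming from $A^{s+1}$, so if $u \in x_n J^s$ then $x_n \mid u$ and $u/x_n \in (x_n J^s):x_n$. Using Lemma \ref{lem_colon_sum}(4) and the structure $J = A + x_n B$, one shows $u/x_n \in A^{s+1} + x_n(\text{something})$, but because $u/x_n$ still has enough $A$-content (it came from $A^{s+1}$, which has no $x_n$), the $x_n$ must be absorbed by $B$-factors, forcing $u \in x_n A^{s+1}$. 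The cleanest phrasing: write $u = x_n v$ with $v$ a monomial; $u \in A^{s+1}$ means $v \in A^{s+1}:x_n = A^{s+1}$ (again by Lemma \ref{lem_colon_sum}(4), since $x_n \notin \supp(A)$), so $u = x_n v \in x_n A^{s+1}$.

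**The Betti splitting.** By Lemma \ref{lem_splitting_criterion}, it suffices to show the inclusion maps $x_n A^{s+1} \to A^{s+1}$ and $x_n A^{s+1} \to x_n J^s$ are $\Tor$-vanishing. The first is multiplication by $x_n$ on a module on which $x_n$ is a nonzerodivisor (indeed $x_n \notin \supp(A^{s+1})$), and multiplication by a nonzerodivisor variable induces the zero map on $\Tor_i^S(\k,-)$ for all $i$ — this is exactly the mechanism used in Lemma \ref{lem_mul_x} combined with the observation that a degree shift by a variable outside the support commutes with tensoring by $\k$; more directly, one can cite \cite[Theorem 4.5]{NV} as in the proof of Lemma \ref{lem_decomp1}. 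The second map, $x_n A^{s+1} \to x_n J^s$, is obtained from $A^{s+1} \to J^s$ by multiplying by $x_n$, so it is $\Tor$-vanishing iff $A^{s+1} \to J^s$ is; and $A^{s+1} \to J^s$ factors as $A^{s+1} \to J^{s+1} \to J^s$ where the first map is the inclusion of a power and the second is multiplication into a smaller power — precisely the situation handled in Lemma \ref{lem_decomp1} via \cite[Theorem 4.5]{NV}.

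**Expected main obstacle.** The algebra identities and the intersection are elementary monomial manipulations. The delicate point is the $\Tor$-vanishing of $A^{s+1} \to J^s$: one must be careful that the factorization through $J^{s+1}$ genuinely lands in the hypotheses of \cite[Theorem 4.5]{NV} (that the relevant map of powers of a monomial ideal is $\Tor$-vanishing), since $A$ and $J$ are different ideals and only the containment $A \subseteq J$ is available. I expect the write-up to lean on the same factorization trick already deployed at the end of the proof of Lemma \ref{lem_decomp1}, so the real content is recognizing that this lemma's Betti-splitting claim reduces to exactly the two $\Tor$-vanishing facts established there.
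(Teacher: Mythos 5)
Your proposal is correct and takes essentially the same route as the paper: prove the identity by expanding $J^s$ via $J = A + x_nB$, get the intersection from $x_n \notin \supp(A)$ (equivalently $A^{s+1}:x_n = A^{s+1}$), and establish the Betti splitting through Lemma \ref{lem_splitting_criterion}, with $x_nA^{s+1}\to A^{s+1}$ Tor-vanishing because it is multiplication by $x_n$ and $A^{s+1}\to J^s$ Tor-vanishing via the factorization $A^{s+1}\to J^{s+1}\to J^s$ and \cite[Theorem 4.5]{NV}. The paper's write-up is merely terser, stating only the latter factorization and leaving the two inclusion maps of the splitting criterion implicit, which you spell out.
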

\begin{proof}
   First, we prove that $J^s(A + (x_n) ) = A^{s+1} + x_nJ^s$. It suffices to prove that $J^s A \subseteq A^{s+1} + x_nJ^s$. Indeed, we have $J = A + x_nB$. Hence, $J^s = \sum_{i=0}^s A^{s-i} (x_nB)^{i}$. Now if $i = 0$, then we have the term $A^{s+1}$. Thus, we may assume that $i > 0$. Then, we have 
   $$A A^{s-i} (x_nB)^i = x_n (A^{s-i+1} (x_nB)^{i-1} B \subseteq x_n (A^{s-i+1} (x_nB)^{i-1} \subseteq x_n J^s.$$

   Now, we prove that $x_n J^s \cap A^{s+1} = x_n A^{s+1}$. Indeed, we have 
   $$x_n A^{s+1} \subseteq x_n J^s \cap A^{s+1} \subseteq x_n \cap A^{s+1} = x_n A^{s+1}.$$
   
   Since the map $A^{s+1} \to J^s$ factors through $A^{s+1} \to J^{s+1} \to J^s$, it is $\Tor$-vanishing by \cite[Theorem 4.5]{NV}. Hence, the decomposition $J^s(A + (x_n)) = A^{s+1} + x_n J^s$ is a Betti splitting by Lemma \ref{lem_splitting_criterion}. The conclusion follows.
\end{proof}

We now deduce recursive equations for the Betti numbers of powers of $I_{n,m}$. Let $a$ be a function on the tuple $(n,s,t,i,j)$ defined as follows. If either $n,s,t,i,j < 0$ then $a(n,s,t,i,j) = 0$. If $n,s,t,i,j \ge 0$ then 
\begin{align*}
    a(n,s,t,i,j) &= \beta_{i,j}(I_{n-1,m}^s I_{n,m}^t).
\end{align*}
Furthermore, we set 
$$    \tilde a(n,s,t,i,j)  = a(n,s,t,i,j) + a(n,s,t,i-1,j-1).$$
With these notations, we have
\begin{lem}\label{lem_rec_1} Assume that $t \ge 1$ and $n,s,i,j \ge 0$ are integers. We have 
    \begin{align*}
    a(n,s,t,i,j) & = a(n,s+1,t-1,i,j) + \tilde a(n-1,0,s,i,j-tm) \\
                & + \tilde a(n-m-1,0,s+1,i-1,j-tm).
\end{align*}
\end{lem}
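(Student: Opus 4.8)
The plan is to translate the module-theoretic decompositions of Lemmas~\ref{lem_decomp1} and~\ref{lem_decomp2} into numerical identities for Betti numbers, using the Betti-splitting remark (which gives $\beta_{i,j}(P) = \beta_{i,j}(I) + \beta_{i,j}(J) + \beta_{i-1,j}(I\cap J)$ for a Betti splitting $P = I+J$) together with Lemma~\ref{lem_mul_x}. Concretely, I would apply the Betti splitting $J^s I^t = J^s f_{n+1}^t + J^{s+1} I^{t-1}$ from Lemma~\ref{lem_decomp1}. The left-hand side is $I_{n-1,m}^s I_{n,m}^t$ up to nothing (it is exactly that), so its $(i,j)$ Betti number is $a(n,s,t,i,j)$. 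The second summand $J^{s+1} I^{t-1} = I_{n-1,m}^{s+1} I_{n,m}^{t-1}$ contributes $a(n,s+1,t-1,i,j)$. The first summand $J^s f_{n+1}^t$ is $f_{n+1}^t$ times $J^s = I_{n-1,m}^s$, and since $f_{n+1} = x_{n+1}\cdots x_{n+m}$ is a squarefree monomial of degree $m$ on variables disjoint from $\supp(J)$, iterating Lemma~\ref{lem_mul_x} gives $\beta_{i,j}(J^s f_{n+1}^t) = \beta_{i,j-tm}(I_{n-1,m}^s) = a(n-1,0,s,i,j-tm)$. Finally the intersection term: Lemma~\ref{lem_decomp1} identifies $J^s f_{n+1}^t \cap J^{s+1} I^{t-1} = f_{n+1}^t J^s(A + (x_n))$, so its shifted Betti numbers are $\beta_{i-1,j-tm}(J^s(A+(x_n)))$.

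The remaining work is to compute $\beta_{i,j}(J^s(A+(x_n)))$ in terms of the $a$-function, and this is where Lemma~\ref{lem_decomp2} enters: $J^s(A+(x_n)) = A^{s+1} + x_n J^s$ is a Betti splitting with intersection $x_n A^{s+1}$. Thus $\beta_{i,j}(J^s(A+(x_n))) = \beta_{i,j}(A^{s+1}) + \beta_{i,j}(x_n J^s) + \beta_{i-1,j}(x_n A^{s+1})$. Here I would unwind the identifications: $A = (f_1,\ldots,f_{n-m}) = I_{n-m-1,m}$, so $\beta_{i,j}(A^{s+1}) = a(n-m-1,0,s+1,i,j)$; by Lemma~\ref{lem_mul_x}, $\beta_{i,j}(x_n J^s) = \beta_{i,j-1}(J^s) = a(n-1,0,s,i,j-1)$ and $\beta_{i-1,j}(x_n A^{s+1}) = \beta_{i-1,j-1}(A^{s+1}) = a(n-m-1,0,s+1,i-1,j-1)$. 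Adding the two $A^{s+1}$-contributions gives precisely $\tilde a(n-m-1,0,s+1,i,j)$ once one also shifts indices appropriately, and the two $J^s$-contributions combine into $\tilde a(n-1,0,s,i,j)$ after accounting for the $x_n$-shift. Substituting $j \mapsto j - tm$ and $i \mapsto i-1$ (from the mapping-cone shift on the intersection) then yields the two tilde-terms $\tilde a(n-1,0,s,i,j-tm)$ and $\tilde a(n-m-1,0,s+1,i-1,j-tm)$ claimed in the lemma.

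I expect the main obstacle to be purely bookkeeping: carefully tracking the degree shift by $tm$ coming from $f_{n+1}^t$, the homological-and-internal-degree shift by $(1,1)$ coming from each $x_n$ and from the mapping cone on the intersection, and verifying that the combination $\beta_{i,j}(A^{s+1}) + \beta_{i-1,j}(x_n A^{s+1})$ is exactly $\tilde a(n-m-1,0,s+1,i,j)$ as opposed to some off-by-one variant. One must also check that the various module identifications respect the convention $a(n,s,t,\cdot,\cdot)=0$ when any argument is negative --- e.g. when $n - m - 1 < 0$ the ideal $A$ is zero and the corresponding terms vanish, which is consistent. There is no deep difficulty here beyond organizing the four pieces; the real content was already proved in Lemmas~\ref{lem_decomp1} and~\ref{lem_decomp2}, so the proof of Lemma~\ref{lem_rec_1} is essentially an exercise in chasing the additivity formula for Betti splittings through the explicit decompositions, being scrupulous about which variable contributes which shift.
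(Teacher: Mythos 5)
Your proposal is correct and follows the paper's proof exactly: apply the Betti splitting of Lemma \ref{lem_decomp1}, handle the intersection $f_{n+1}^t J^s(A+(x_n))$ via Lemma \ref{lem_mul_x} and the further splitting of Lemma \ref{lem_decomp2}, and group the resulting five terms into the two tilde-functions. The only caution concerns your final grouping sentence: the mapping-cone shift $i \mapsto i-1$ applies only to the contributions coming from the intersection (not to $\beta_{i,j}(J^s f_{n+1}^t)$), and with that bookkeeping done as you indicate, the terms indeed combine into $\tilde a(n-1,0,s,i,j-tm)$ and $\tilde a(n-m-1,0,s+1,i-1,j-tm)$ as claimed.
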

\begin{proof} Note that $I = I_{n,m}$, $J = I_{n-1,m}$, and $A = I_{n-m-1,m}$. By Lemma \ref{lem_decomp1}, the decomposition $J^s I^t = J^s f_{n+1}^t + J^{s+1} I^{t-1}$ is a Betti splitting. Hence, we have 
    \begin{equation}
        a(n,s,t,i,j) = a(n,s+1,t-1,i,j) +  \beta_{i,j}(J^s f_{n+1}^t) + \beta_{i-1,j}(P)
    \end{equation}
    where $P = f_{n+1}^tJ^s(A + (x_n) )$. By definition and Lemma \ref{lem_mul_x}, $\beta_{i,j}(J^sf_{n+1}^t) = \beta_{i,j-tm}(J^s) = a(n-1,0,s,i,j-tm)$ and $\beta_{i-1,j} (P) = \beta_{i,j-tm}(Q)$ where $Q = J^s(A + (x_n) )$. By Lemma \ref{lem_decomp2}, the decomposition $Q = A^{s+1} + x_n J^{s}$ is a Betti splitting. Hence, 
    $$\beta_{i-1,j-tm}(Q) = \beta_{i-1,j-tm} (x_n J^s) + \beta_{i-1,j-tm}(A^{s+1}) + \beta_{i-2,j-tm}(x_nA^{s+1}).$$
    The conclusion follows from Lemma \ref{lem_mul_x}.
\end{proof}

We now denote by 
$$b(n,t,i,j) = a(n,0,t,i,j) = \beta_{i,j}(I_{n,m}^t).$$

\begin{lem}\label{lem_rec_2} For all $n,t,i,j \ge 0$ we have 
\begin{align*}
    b(n,t+1,i,j+m) & = b(n,t,i,j) + b(n-1,t+1,i,j+m) + b(n-1,t,i-1,j-1) \\
    & + b(n-m-1,t+1,i-1,j) + b(n-m-1,t+1,i-2,j-1).
\end{align*}    
\end{lem}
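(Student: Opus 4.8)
The plan is to derive Lemma~\ref{lem_rec_2} directly from the general recursion in Lemma~\ref{lem_rec_1} by specializing the ``secondary power'' parameter $s$ to $0$ and iterating once. Recall that $b(n,t,i,j) = a(n,0,t,i,j)$, so the obvious move is to set $s = 0$ in Lemma~\ref{lem_rec_1}, which gives
\begin{align*}
  b(n,t,i,j) = a(n,0,t,i,j) &= a(n,1,t-1,i,j) + \tilde a(n-1,0,0,i,j-tm) \\
  &\quad + \tilde a(n-m-1,0,1,i-1,j-tm).
\end{align*}
The term $a(n,1,t-1,i,j)$ still involves $s = 1$, so I would apply Lemma~\ref{lem_rec_1} a second time with $s = 1$ (and $t$ replaced by $t-1$) to unfold it. The point is that $\tilde a(n-1,0,0,\cdot,\cdot)$ is a Betti number of $I_{n-1,m}^0 I_{n-1,m}^0$-type object, i.e.\ essentially of the ring $S$ or of the unit ideal, so its only nonzero contribution is in homological degree $0$; one computes $\tilde a(n-1,0,0,i,j-tm) = a(n-1,0,0,i,j-tm) + a(n-1,0,0,i-1,j-tm-1)$, and $a(n-1,0,0,i,k) = \beta_{i,k}(I_{n-1,m}^0) = \beta_{i,k}(S)$, which is $1$ if $i=k=0$ and $0$ otherwise. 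So after shifting indices this term disappears or becomes a trivial correction, depending on whether the exponent conventions make $I^0 = S$; I'd need to check the convention in the definition of $a$, but in any case this is a bookkeeping term, not a substantive one.

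The cleaner route, and the one I expect the authors intend, is to change variables first: write $j + m$ in place of $j$ and $t+1$ in place of $t$ throughout, so that the target identity reads as an expression for $b(n,t+1,i,j+m)$. Applying the $s=0$ case of Lemma~\ref{lem_rec_1} with these substituted parameters yields
$$b(n,t+1,i,j+m) = a(n,1,t,i,j+m) + \tilde a(n-1,0,0,i,j+m-(t+1)m) + \tilde a(n-m-1,0,1,i-1,j+m-(t+1)m),$$
and the degree shift $j + m - (t+1)m = j - tm$ is what lines up the right-hand side. Now I apply Lemma~\ref{lem_rec_1} once more to $a(n,1,t,i,j+m)$ with $s = 1$: this produces $a(n,2,t-1,i,j+m)$ — wait, that does not terminate — so instead I should recognize that $a(n,1,t,i,j+m)$ must be handled by the decomposition with $s=1$ but keeping $t$ fixed, giving $a(n,1,t,i,j+m) = a(n,2,t-1,i,j+m) + \cdots$; this regression in $t$ combined with the base case $t = 0$ (where $I^0 = S$ and $a(n,s,0,i,j) = \beta_{i,j}(I_{n-1,m}^s)= b(n-1,s,i,j)$) is what ultimately closes the recursion. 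The right bookkeeping is: iterate Lemma~\ref{lem_rec_1} in the variable $s$ from $0$ up, collapsing the $t$-index, until one reaches a term of the form $a(n, s', 0, \cdot, \cdot) = b(n-1, s', \cdot, \cdot)$, and collect all the $\tilde a$-contributions along the way. Because $\tilde a(n-1,0,s,\cdot,\cdot)$ and $\tilde a(n-m-1,0,s+1,\cdot,\cdot)$ appear with $s$ running, the telescoping has to be organized carefully so that only the stated five terms survive.

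Concretely, I expect the proof to go as follows. First expand $b(n,t+1,i,j+m)$ using Lemma~\ref{lem_rec_1} with $s=0$ and the substituted parameters; separately expand $b(n,t,i,j)$ using Lemma~\ref{lem_rec_1} with $s=0$. Subtract: most of the $\tilde a$ terms that involve the full ideal $J = I_{n-1,m}$ in the second power slot will match up after the degree shift $j+m-(t+1)m = j-tm$, leaving a difference $a(n,1,t,i,j+m) - a(n,1,t-1,i,j)$. By Lemma~\ref{lem_rec_1} applied with $s=1$, this difference telescopes again, and the residual terms are exactly the pieces $\tilde a(n-1,0,1,\cdot,\cdot)$ and $\tilde a(n-m-1,0,2,\cdot,\cdot)$, which, once I interpret $\tilde a$ and resolve the $I^0$/$S$ conventions, reassemble into $b(n-1,t+1,i,j+m) + b(n-1,t,i-1,j-1) + b(n-m-1,t+1,i-1,j) + b(n-m-1,t+1,i-2,j-1)$. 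The main obstacle is precisely this telescoping step: keeping track of the homological shift $(i \mapsto i-1)$ coming from the $\tilde a$ definition simultaneously with the degree shifts $(j \mapsto j - tm)$ and the index shifts in $n$ ($n \mapsto n-1$ and $n \mapsto n-m-1$), and making sure the $s$-recursion in Lemma~\ref{lem_rec_1} terminates cleanly at $t = 0$ so that no stray $a(n, s, t, \cdot, \cdot)$ with $s \ge 1$ is left over. Everything else is substitution and collecting like terms.
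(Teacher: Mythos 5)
Your overall strategy---iterate Lemma \ref{lem_rec_1} starting from $s=0$, use the base case $a(n,s',0,\cdot,\cdot)=b(n-1,s',\cdot,\cdot)$, and subtract the expansions of $b(n,t+1,i,j+m)$ and $b(n,t,i,j)$---is exactly the paper's route, but the concrete execution you describe is wrong at the step you yourself flag as the main obstacle. After one application of Lemma \ref{lem_rec_1}, the two first-level $\tilde a$ terms cancel outright in the subtraction (both carry the shift $j+m-(t+1)m=j-tm$), leaving $a(n,1,t,i,j+m)-a(n,1,t-1,i,j)$; and the same cancellation happens again at the next level, since $j+m-tm=j-(t-1)m$. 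So, contrary to your claim, no residual terms $\tilde a(n-1,0,1,\cdot,\cdot)$ or $\tilde a(n-m-1,0,2,\cdot,\cdot)$ survive at the $s=1$ stage---and such terms could not possibly reassemble into $b(n-1,t+1,\cdot,\cdot)$, $b(n-1,t,\cdot,\cdot)$, $b(n-m-1,t+1,\cdot,\cdot)$, whose power index is $t$ or $t+1$, not $1$ or $2$. The telescoping must be pushed all the way to $s=t$: there the expansion of $b(n,t,i,j)$ bottoms out at $a(n,t,0,i,j)=b(n-1,t,i,j)$, while on the other side $a(n,t,1,i,j+m)$ admits one more application of Lemma \ref{lem_rec_1}, giving $b(n-1,t+1,i,j+m)+\tilde a(n-1,0,t,i,j)+\tilde a(n-m-1,0,t+1,i-1,j)$. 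Writing $\tilde b(n,t,i,j)=b(n,t,i,j)+b(n,t,i-1,j-1)$, this yields $b(n,t+1,i,j+m)-b(n,t,i,j)=b(n-1,t+1,i,j+m)-b(n-1,t,i,j)+\tilde b(n-1,t,i,j)+\tilde b(n-m-1,t+1,i-1,j)$, and expanding the tildes gives exactly the five stated terms. As written, your proposal leaves this entire bookkeeping open and asserts an incorrect identification of the surviving terms, so it does not constitute a proof.

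For comparison, the paper organizes the same computation by first unrolling Lemma \ref{lem_rec_1} completely into the closed form $b(n,t,i,j)=b(n-1,t,i,j)+\sum_{s=0}^{t-1}\bigl[\tilde b(n-1,s,i,j-(t-s)m)+\tilde b(n-m-1,s+1,i-1,j-(t-s)m)\bigr]$ and then subtracting the $(t+1,j+m)$ instance from the $(t,j)$ instance; after reindexing, the two sums agree except for the extra $s=t$ term, which is the clean version of the telescoping you left unfinished. Your side worry about the $I^0$ convention is immaterial: the $s=0$ contributions are honest $b(n-1,0,\cdot,\cdot)$ terms (Betti numbers of $S$), and in any case they cancel in the subtraction.
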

\begin{proof} Applying Lemma \ref{lem_rec_1}, we deduce that   
\begin{align*}
    a(n,0,t,i,j) &= a(n,t,0,i,j) + \sum_{s=0}^{t-1}\tilde a(n-1,0,s,i,j-(t-s)m) \\
     &+\sum_{s=1}^t\tilde a(n-m-1,0,s,i-1,j-(t-s+1)m).
\end{align*}
We also let $\tilde b(n,t,i,j) = b(n,t,i,j) + b(n,t,i-1,j-1)$. Then we have 
\begin{align*}
    b(n,t,i,j) = b(n-1,t,i,j) + &\sum_{s=0}^{t-1} \left [ \tilde b(n-1,s,i,j-(t-s)m) \right . \\
    &  + \left .\tilde b(n-m-1,s+1,i-1,j-(t-s)m) \right ].
\end{align*}
    
Getting the equation for $b(n,t+1,i,j+m)$ then subtracting that to the equation for $b(n,t,i,j)$ we deduce that 
\begin{align*}
     b(n,t+1,i,j+m) &=b(n-1,t+1,i,j+m) + b(n,t,i,j) - b(n-1,t,i,j) \\
    &+ \tilde b(n-1,t,i,j) + \tilde b(n-m-1,t+1,i-1,j).
\end{align*}
The conclusion follows.
\end{proof}

Since $I_{n,m}^t$ is generated in degree $tm$, for all $i \ge 0$, we have $\beta_{i,j} (I_m(P_{n+m}) ^t)  = 0 $ if $j < i + tm$. For convenient in deriving these Betti numbers, we let $c(n,t,i,j)  = b(n,t,i,i + tm + j)$ and $\Psi(x,y,z,w)$ be the generating function
\begin{align*}
    \Psi (x,y,z,w) &= \sum_{n,t,i,j \ge 0} c(n,t,i,j) x^n y^t z^i w^j.
\end{align*}

We have 

\begin{thm}\label{teo2}
    The generating function of $c(n,t,i,j)$ is 
    $$ \Psi(x,y,z,w) =  \frac{1 - x(1 + x^m z w^{m-1} (1 + z)}{(1-x) (1 - (x(1 + x^m z w^{m-1}(1 + z) ) + y(1 + xz) ) )}.$$    . 
\end{thm}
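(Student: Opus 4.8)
The plan is to translate the recursive relation in Lemma \ref{lem_rec_2} into a functional equation for $\Psi$, then solve that equation by elementary algebra. First I would rewrite Lemma \ref{lem_rec_2} in terms of $c(n,t,i,j) = b(n,t,i,i+tm+j)$. Substituting and tracking the degree shifts carefully: the left-hand side $b(n,t+1,i,j+m)$ becomes (after the change of variables sends the homological degree $i$, the twist-exponent $j+m$, and the power $t+1$ to the appropriate new $j$-argument) a term $c(n,t+1,i,j)$ up to an index shift I will compute; similarly $b(n,t,i,j)\mapsto c$, $b(n-1,t+1,i,j+m)\mapsto c(n-1,t+1,i,\cdot)$, $b(n-1,t,i-1,j-1)\mapsto c(n-1,t,i-1,\cdot)$, and the two terms $b(n-m-1,t+1,i-1,j)$, $b(n-m-1,t+1,i-2,j-1)$ become $c(n-m-1,t+1,i-1,\cdot)$ and $c(n-m-1,t+1,i-2,\cdot)$ with explicit $j$-shifts. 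The key bookkeeping is that the normalization $j \mapsto j' $ with $i+tm$ absorbed means each of the five right-hand terms picks up a monomial factor in $x,y,z,w$ when we pass to generating functions.

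Next I would multiply the recursion (valid for all $n,t,i,j\ge 0$) by $x^n y^{t+1} z^i w^j$ and sum over $n,t,i,j \ge 0$. The left side gives (essentially) $y\Psi$ minus the $t=0$ slice; each right-hand term gives $\Psi$ times a monomial: the $c(n-1,\cdot)$ terms contribute a factor $x$ (and, for the ones with $i-1$ and a $w$-shift, factors $z$ and $w$ as dictated by the reindexing), the $c(n-m-1,\cdot)$ terms contribute $x^{m+1}$ times $z$ or $z^2 w$ respectively, and the $c(n,t,\cdot)$ term on the right contributes a factor coming only from the $y$-degree mismatch. After collecting, one obtains a relation of the shape
\[
\Psi \cdot \bigl(1 - x - x^{m+1}z w^{m-1}(1+z) - y(1+xz)\bigr) \;=\; (\text{boundary terms from } t=0,\ n<m+1,\ i<2).
\]
The boundary terms must themselves be evaluated: the $t=0$ slice is governed by $b(n,0,i,j) = \beta_{i,j}(I_{n,m})$, whose generating function (the Alilooee–Faridi / Eagon–Northcott-type formula already alluded to in the introduction) is the "$y=0$" specialization, and that specialization should be exactly $\dfrac{1-x(1+x^m z w^{m-1}(1+z))}{(1-x)(1-x(1+x^m z w^{m-1}(1+z)))}$. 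Checking that this closed form for the $t=0$ layer is consistent — either by a direct small computation using Lemma \ref{lem_decomp1}/\ref{lem_decomp2} at $t=0$, or by matching with the known resolution of $I_{n,m}$ — is where I expect to spend real effort.

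Finally, assembling the pieces: the denominator $(1-x)\bigl(1 - (x(1+x^m z w^{m-1}(1+z)) + y(1+xz))\bigr)$ in the claimed formula factors precisely as $(1-x)$ times the operator $\bigl(1 - x - x^{m+1}zw^{m-1}(1+z) - y(1+xz)\bigr)$ acting above, so once the numerator is identified with the correctly-summed boundary data we are done. I would verify the formula rigorously by clearing denominators and checking that
\[
\Psi\bigl(1-x\bigr)\bigl(1 - x - x^{m+1}zw^{m-1}(1+z) - y(1+xz)\bigr) = 1 - x\bigl(1 + x^m z w^{m-1}(1+z)\bigr)
\]
is equivalent, coefficient by coefficient, to Lemma \ref{lem_rec_2} together with the base cases $c(n,0,i,j)$, $c(n,t,i,j)=0$ for $n<0$ or $i<0$, and the generation-degree vanishing $\beta_{i,j}(I_{n,m}^t)=0$ for $j<i+tm$. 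The main obstacle, as flagged, is the careful derivation of the exact monomial shifts in each of the five recursion terms under the substitution $j\mapsto i+tm+j$ and the correct identification of all boundary contributions; the algebra of solving for $\Psi$ afterward is routine.
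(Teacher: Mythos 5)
Your overall strategy coincides with the paper's: rewrite Lemma \ref{lem_rec_2} in terms of $c(n,t,i,j)$, multiply by $x^n y^{t+1} z^i w^j$, sum, and solve the resulting functional equation. However, there is a genuine error in your treatment of the boundary data. In the paper's notation $b(n,0,i,j)=\beta_{i,j}(I_{n,m}^0)=\beta_{i,j}(S)$, so the $t=0$ slice is trivial: $c(n,0,i,j)=1$ exactly when $i=j=0$, and its generating function is $\sum_{n\ge 0}x^n=\tfrac{1}{1-x}$. You instead declare $b(n,0,i,j)=\beta_{i,j}(I_{n,m})$ and propose to verify an Alilooee--Faridi-type closed form for this layer as the $y=0$ specialization, expecting this to be where the real effort lies. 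That identification is wrong: the Betti numbers of $I_{n,m}$ sit in the coefficient of $y^1$ and are an \emph{output} of the theorem (Corollary \ref{cor_t1}), not an input; moreover the expression you write for the $y=0$ slice collapses to $\tfrac{1}{1-x}$ after cancelling the common factor $1-x\bigl(1+x^m z w^{m-1}(1+z)\bigr)$, so it cannot encode the resolution of $I_{n,m}$. Feeding the $t=1$ Betti numbers in as boundary data would either produce the wrong functional equation or force you to prove the $t=1$ case of Theorem \ref{teo1} by separate means, defeating the purpose of the recursion. The argument works smoothly precisely because the base layer is $I^0=S$.

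Second, the part you yourself flag as the main obstacle --- the exact monomial shifts --- is left undone, and that is the actual content of the proof. In $c$-coordinates Lemma \ref{lem_rec_2} reads $c(n,t+1,i,j)=c(n,t,i,j)+c(n-1,t+1,i,j)+c(n-1,t,i-1,j)+c(n-m-1,t+1,i-1,j-m+1)+c(n-m-1,t+1,i-2,j-m+1)$, so after summation the five terms carry weights $y$, $x$, $xyz$, $x^{m+1}zw^{m-1}$ and $x^{m+1}z^2w^{m-1}$, and the three terms whose $t$-argument is $t+1$ contribute $\Psi-\tfrac{1}{1-x}$ rather than $\Psi$; the left-hand side likewise sums to $\Psi-\tfrac{1}{1-x}$, not ``$y\Psi$ minus the $t=0$ slice'' as you state (the factor $y$ belongs to the right-hand term $c(n,t,i,j)$). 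With these in place one gets $\bigl(1-x(1+x^mzw^{m-1}(1+z))-y(1+xz)\bigr)\Psi=\frac{1-x(1+x^mzw^{m-1}(1+z))}{1-x}$ and the theorem follows; without them, and with the base case misidentified, the proposal remains a plan rather than a proof.
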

\begin{proof} By Lemma \ref{lem_rec_2}, for all $t \ge 0$ we have 
\begin{align*}
    c(n,t&+1,i,j)   = c(n,t,i,j) + c(n-1,t+1,i,j) + c(n-1,t,i-1,j) \\
    & + c(n-m-1,t+1,i-1,j-m+1) + c(n-m-1,t+1,i-2,j-m+1).
\end{align*}
Clearly, 
$$c(n,0,i,j) = \begin{cases} 1 & \text{ if } i=j = 0 \text{ and } n \ge 0,\\
0 & \text{ otherwise}.\end{cases}$$
Let $\Psi_0 = \sum_{n,i,j \ge 0} c(n,0,i,j)x^n z^i w^j$. Then we have 

$$\Psi_0 = \sum_{n \ge 0} x^n = \frac{1}{1-x}.$$

Thus, we have 

\begin{align*}
    \Psi &= \frac{1}{1-x} + \sum_{n,i,j \ge 0, t \ge 1} c(n,t,i,j) x^n y^t z^i w^j \\
    & = \frac{1}{1-x} + \sum_{n,i,j, t\ge 1} \left [ c(n,t-1,i,j) + c(n-1,t,i,j) + c(n-1,t-1,i-1,j) \right .\\
    & + \left . c(n-m-1,t,i-1,j-m+1) + c(n-m-1,t,i-2,j-m+1) \right ] x^ny^tz^iw^j \\
    & =  \frac{1}{1-x} +  y \Psi + x (\Psi - \frac{1}{1-x}) + xyz \Psi + (x^{m+1} z w^{m-1}  + x^{m+1} z^2 w^{m-1} ) (\Psi - \frac{1}{1-x}).
\end{align*}

Hence, 

$$(1 - x(1 + x^m z w^{m-1} (1 + z))  - y(1 + xz) ) \Psi = \frac{1 - x(1 + x^m z w^{m-1} (1 + z))}{1-x}. $$

The conclusion follows.
\end{proof}
We are now ready for the proof of Theorem \ref{teo1}.
\begin{proof}[Proof of Theorem \ref{teo1}]
    Since $I_{n,m}^t$ is generated in degree $tm$, we have $\beta_{i,j}(I_{n,m}^t) = 0 \text{ if  } j < i + tm.$ By definition, $b(n,t,i,i+tm+j) = c(n,t,i,j)$ is the coefficient of $x^ny^tz^iw^j$ of $\Psi(x,y,z,w)$. Furthermore, since $w$ only appears in $\Psi$ with exponent $m-1$, we deduce that $c(n,t,i,j) = 0$ unless $j = (m-1)\ell$ for some $\ell \ge 0$. 

    Now assume that $j = (m-1) \ell$ for some integer $\ell \ge 0$. We prove by induction on $t$ the formula for $c(n,t,i,(m-1)\ell)$. The base case $t = 0$ is clear. Let 
    $$d(n,t,i,\ell) = \binom{t+\ell-1}{\ell}\binom{n-\ell m}{i-\ell}\binom{n+t-\ell m-i+\ell}{t-i+2\ell}.$$
    Since $c(n,0,t,i,(m-1)\ell) = d(n,0,i,\ell)$ for all $n,i,\ell$, for the induction step, we need to prove the following equation for all $t \ge 0$.
    \begin{align*}
    d(n,t&+1,i, \ell )   = d(n,t,i, \ell ) + d(n-1,t+1,i, \ell ) + d(n-1,t,i-1, \ell ) \\
    & + d(n-m-1,t+1,i-1,  \ell-1 ) + d(n-m-1,t+1,i-2,\ell-1).
\end{align*}
We have 
\begin{align*}
d(n,t+1,i, \ell )  &= \binom{t+\ell}{\ell}\binom{n-\ell m}{i-\ell}\binom{n+t-\ell m-i+\ell+1}{t-i+2\ell+1}\\
    d(n-1,t+1,i, \ell ) &= \binom{t+\ell}{\ell}\binom{n-\ell m-1}{i-\ell}\binom{n+t-\ell m-i+\ell}{t-i+2\ell+1}\\
    d(n-1,t,i-1, \ell ) &=\binom{t+\ell-1}{\ell}\binom{n-\ell m-1}{i-\ell-1}\binom{n+t-\ell m-i+\ell}{t-i+2\ell+1}\\
    d(n-m-1,t+1,i-1,\ell-1) &= \binom{t+\ell-1}{\ell-1}\binom{n-\ell m-1}{i-\ell}\binom{n+t-\ell m-i+\ell}{t-i+2\ell}\\
    d(n-m-1,t+1,i-2,\ell-1) &= \binom{t+\ell-1}{\ell-1}\binom{n-\ell m-1}{i-\ell -1}\binom{n+t-\ell m-i+\ell+1}{t-i+2\ell+1}
\end{align*}
We have the following binomial identities
\begin{align*}
    \binom{t+\ell}{\ell} &= \binom{t+\ell-1}{\ell} + \binom{t+\ell-1}{\ell-1}\\
    \binom{n-\ell m}{i-\ell} & = \binom{n-\ell m-1}{i-\ell} + \binom{n-\ell m-1}{i-\ell-1}\\
    \binom{n+t-\ell m-i+\ell+1}{t-i+2\ell+1}&= \binom{n+t-\ell m-i+\ell}{t-i+2\ell+1} + \binom{n+t-\ell m-i+\ell}{t-i+2\ell}.
\end{align*}
The conclusion follows from a routine check. 
\end{proof}

We now have some applications of our main result.

\begin{cor}\label{cor_t1} The Betti number $\beta_{i,j}(I_{n,m})$ is equal to $0$ unless $j = i + m + (m-1)\ell$ for some $\ell \ge 0$ and 
$$\beta_{i,i + m + (m-1)\ell}(I_{n,m}) = \binom{n - \ell m}{i - \ell} \binom{n + 1 - \ell m - i + \ell}{2 \ell + 1 -i}.$$    
\end{cor}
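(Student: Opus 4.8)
The plan is simply to specialize Theorem \ref{teo1} to the case $t = 1$ and to check that the stated constraints match up. Setting $t = 1$ in Theorem \ref{teo1}, the degree in which the nonzero Betti numbers can occur is $j = i + tm + (m-1)\ell = i + m + (m-1)\ell$, which is exactly the degree appearing in the corollary. For the value, Theorem \ref{teo1} gives
\[
\beta_{i,\,i+m+(m-1)\ell}(I_{n,m}) = \binom{1+\ell-1}{\ell}\binom{n-\ell m}{i-\ell}\binom{n+1-\ell m-i+\ell}{1-i+2\ell}.
\]

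First I would simplify the leading factor: $\binom{1+\ell-1}{\ell} = \binom{\ell}{\ell} = 1$ for every $\ell \ge 0$ (and this is where the weight $\binom{t+\ell-1}{\ell}$ collapses). Next I would rewrite the exponent in the last binomial coefficient as $1 - i + 2\ell = 2\ell + 1 - i$, so that the expression becomes
\[
\binom{n-\ell m}{i-\ell}\binom{n+1-\ell m-i+\ell}{2\ell+1-i},
\]
which is precisely the formula claimed in Corollary \ref{cor_t1}.

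Finally I would address the discrepancy in the index range: Theorem \ref{teo1} asserts the formula only for $0 \le \ell \le i$, whereas the corollary states it for all $\ell \ge 0$. This is harmless, since for $\ell > i$ one has $i - \ell < 0$, and by the convention recalled in the introduction ($\binom{a}{b} = 0$ when $b < 0$) the factor $\binom{n-\ell m}{i-\ell}$ vanishes, so the right-hand side is $0$ in that range, consistent with $\beta_{i,j}(I_{n,m}) = 0$ there. There is no real obstacle here; the only point requiring a word of care is exactly this check that the convention on binomial coefficients absorbs the upper bound $\ell \le i$ from Theorem \ref{teo1}.
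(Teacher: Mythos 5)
Your proposal is correct and is exactly the paper's argument: the paper proves Corollary \ref{cor_t1} by specializing Theorem \ref{teo1} to $t=1$, and your simplifications ($\binom{\ell}{\ell}=1$, rewriting $1-i+2\ell = 2\ell+1-i$, and noting that the convention $\binom{a}{b}=0$ for $b<0$ absorbs the range condition $\ell \le i$) just spell out the routine details the paper leaves implicit.
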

\begin{proof}
    Applying Theorem \ref{teo1} with $t = 1$, the conclusion follows.
\end{proof}

\begin{cor}\label{cor_l0} For all integers $i \ge 0$ and $t\ge 1$, we have 
$$\beta_{i,i + tm}(I_{n,m}^t) = \binom{n}{i} \binom{n + t - i}{t -i}.$$    
\end{cor}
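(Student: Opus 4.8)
The plan is to read off this corollary as the "bottom-strand" specialization $\ell = 0$ of Theorem \ref{teo1}. First I would observe that since $I_{n,m}^t$ is generated in degree $tm$, the homological degree $i$ part of the resolution can only contribute in degrees $j \ge i + tm$, and the boundary case $j = i + tm$ is exactly the \emph{linear strand}. Theorem \ref{teo1} says $\beta_{i,j}(I_{n,m}^t) = 0$ unless $j = i + tm + (m-1)\ell$ for some integer $\ell$ with $0 \le \ell \le i$; imposing $j = i + tm$ forces $(m-1)\ell = 0$, and since $m \ge 2$ we have $m - 1 \ge 1$, so the only possibility is $\ell = 0$.

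Next I would substitute $\ell = 0$ into the formula of Theorem \ref{teo1}:
$$\beta_{i,i+tm}(I_{n,m}^t) = \binom{t+\ell-1}{\ell}\binom{n-\ell m}{i-\ell}\binom{n+t-\ell m-i+\ell}{t-i+2\ell}\Bigg|_{\ell=0} = \binom{t-1}{0}\binom{n}{i}\binom{n+t-i}{t-i}.$$
Since $t \ge 1$ we have $t - 1 \ge 0$, so $\binom{t-1}{0} = 1$ (this is the only place the hypothesis $t \ge 1$ enters, and it is precisely what makes the leading binomial factor disappear). Hence $\beta_{i,i+tm}(I_{n,m}^t) = \binom{n}{i}\binom{n+t-i}{t-i}$, as claimed.

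There is essentially no obstacle here: the corollary is a pure specialization of the main theorem, and the only thing to be careful about is to justify that no value $\ell \ge 1$ contributes to the degree $j = i + tm$, which is immediate from $m \ge 2$. If desired, one could alternatively extract this directly from the generating function $\Psi$ in Theorem \ref{teo2} by setting $w = 0$ (killing all terms with $\ell \ge 1$, since $w$ occurs only to the power $m-1 \ge 1$) and reading the coefficient of $x^n y^t z^i$ in $\Psi(x,y,z,0) = \dfrac{1-x}{(1-x)(1-(x+y(1+xz)))} = \dfrac{1}{1-x-y-xyz}$; but invoking Theorem \ref{teo1} is the shortest route.
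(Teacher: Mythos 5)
Your proposal is correct and follows exactly the paper's route: Corollary \ref{cor_l0} is obtained by specializing Theorem \ref{teo1} to $\ell = 0$, the only value compatible with $j = i + tm$ since $m - 1 \ge 1$. The extra details you supply (why $\ell = 0$ is forced, why $\binom{t-1}{0}=1$ needs $t \ge 1$) and the generating-function aside are sound but not a different method.
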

\begin{proof}
    Applying Theorem \ref{teo1} with $\ell = 0$, the conclusion follows.
\end{proof}

\begin{cor}\label{cor_reg} Assume that $n \ge 0$ and $t \ge 1$. Then 
$$\reg (I_{n,m}^t) = t m  + (m-1) \left  \lfloor \frac{n }{m+1} 
 \right \rfloor.$$  
\end{cor}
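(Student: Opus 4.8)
The plan is to read off the regularity directly from the closed formula in Theorem \ref{teo1}. Recall that $\reg(I_{n,m}^t) = \sup\{\, j-i : \beta_{i,j}(I_{n,m}^t) \neq 0\,\}$, and by Theorem \ref{teo1} every nonzero graded Betti number of $I_{n,m}^t$ occurs in a slot with $j - i = tm + (m-1)\ell$ for some integer $0 \le \ell \le i$. Hence computing the regularity amounts to finding the largest value of $\ell$ for which $\beta_{i,i+tm+(m-1)\ell}(I_{n,m}^t) \neq 0$ for at least one $i$. I would set $\ell_0 = \lfloor n/(m+1) \rfloor$ and prove two complementary statements: (a) no $\ell > \ell_0$ produces a nonzero Betti number, and (b) $\ell = \ell_0$ does.

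For (a): if $\ell > \ell_0$ then $\ell$ is an integer exceeding $n/(m+1)$, so $\ell(m+1) > n$, i.e.\ $n - \ell m < \ell$. Looking at the product
$$\binom{t+\ell-1}{\ell}\binom{n-\ell m}{i-\ell}\binom{n+t-\ell m-i+\ell}{t-i+2\ell},$$
the last factor has top minus bottom equal to $(n+t-\ell m-i+\ell) - (t-i+2\ell) = n - \ell m - \ell < 0$, so by the stated convention on binomial coefficients it vanishes, and therefore $\beta_{i,\,i+tm+(m-1)\ell}(I_{n,m}^t) = 0$ for every $i$. Consequently $\reg(I_{n,m}^t) \le tm + (m-1)\ell_0$.

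For (b): take $i = \ell_0$. Since $\ell_0(m+1) \le n$ we get $n - \ell_0 m \ge \ell_0 \ge 0$, so $\binom{n-\ell_0 m}{i-\ell_0} = \binom{n-\ell_0 m}{0} = 1$. Also $\binom{t+\ell_0-1}{\ell_0} \neq 0$ because $t \ge 1$ gives $t+\ell_0-1 \ge \ell_0 \ge 0$, and $\binom{n+t-\ell_0 m-i+\ell_0}{t-i+2\ell_0} = \binom{n+t-\ell_0 m}{t+\ell_0} \neq 0$ because $n+t-\ell_0 m \ge t+\ell_0$ (again equivalent to $\ell_0(m+1)\le n$) and $t+\ell_0 \ge 0$. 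Thus $\beta_{\ell_0,\,\ell_0+tm+(m-1)\ell_0}(I_{n,m}^t) \neq 0$, so $\reg(I_{n,m}^t) \ge tm + (m-1)\ell_0$. Combining with (a) gives the claimed equality.

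There is essentially no deep obstacle here: the only care required is in handling the degenerate ranges of the binomial coefficients (e.g.\ when $n - \ell m$ is negative, or when $t - i + 2\ell$ is negative) so that the vanishing and non-vanishing statements hold exactly, which the stated conventions on $\binom{a}{b}$ make routine. An alternative, more self-contained route would bypass Theorem \ref{teo1} entirely and track the top of the resolution inductively through the Betti splittings of Lemmas \ref{lem_decomp1} and \ref{lem_decomp2}, but since Theorem \ref{teo1} is already available, the direct extremal analysis above is the cleanest.
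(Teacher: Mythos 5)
Your proof is correct and follows essentially the same route as the paper: both read the regularity off Theorem \ref{teo1} by noting that nonvanishing forces $n-\ell m\ge \ell$ (hence $\ell\le\lfloor n/(m+1)\rfloor$) and that $i=\ell=\lfloor n/(m+1)\rfloor$ gives a nonzero Betti number. Your write-up just spells out the binomial-coefficient conventions a bit more explicitly than the paper's system of inequalities; no gap.
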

\begin{proof} By Theorem \ref{teo1}, the Betti numbers $\beta_{i,i+tm + (m-1)\ell} (I_{n,m}^t)$ is non-zero if and only if the following system of inequalities have non-negative integer solutions 
\begin{equation}\label{eq_non_zero}
\begin{split}
n - \ell m &\ge \max \{ \ell , i - \ell\}, \\
    \min \{i - \ell, 2 \ell +t - i\} &\ge 0.
\end{split}   
\end{equation}
In particular, $\ell \le \left \lfloor \frac{n }{m+1} \right \rfloor$. Furthermore, when $\ell = \left \lfloor \frac{n }{m+1} \right \rfloor$, the system Eq.\eqref{eq_non_zero} has a non-negative integer solution $i = \ell$. Hence, 
\begin{align*}
    \reg (I_{n,m}^t) &= \max \{j - i \mid \beta_{i,j}(I_{n,m}^t) \neq 0 \} \\
    &=tm + (m-1) \left \lfloor \frac{n }{m+1} \right \rfloor.
\end{align*}
The conclusion follows.
\end{proof}

\begin{cor}\label{cor_pd} Assume that $n \ge 0$ and $t \ge 1$. Then 
$$\pd (I_{n,m}^t) = \min \left \{n,  t - 1 +  \left \lfloor \frac{n-t+1}{m+1} \right \rfloor + \left \lceil \frac{n-t+1}{m+1} \right \rceil \right \}.$$    
\end{cor}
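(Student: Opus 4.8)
The plan is to extract $\pd(I_{n,m}^t)$ directly from the closed formula of Theorem \ref{teo1}, in the same spirit as the computation of $\reg$ in Corollary \ref{cor_reg}. The first step is to record exactly when $\beta_{i,i+tm+(m-1)\ell}(I_{n,m}^t)\neq 0$. Since $t\ge 1$, the factor $\binom{t+\ell-1}{\ell}$ is always a positive integer, so the Betti number is nonzero precisely when the other two binomial coefficients are nonzero, which (for an integer $\ell\ge 0$) amounts to
$$0\le \ell\le i\le \min\{\,n-(m-1)\ell,\ t+2\ell\,\}\quad\text{and}\quad (m+1)\ell\le n.$$
Here $(m+1)\ell\le n$ already forces $\ell\le\min\{n-(m-1)\ell,\ t+2\ell\}$, so for each $\ell$ with $0\le\ell\le L:=\lfloor n/(m+1)\rfloor$ the largest feasible homological degree is $\min\{n-(m-1)\ell,\ t+2\ell\}$. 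Hence
$$\pd(I_{n,m}^t)=\max_{0\le \ell\le L}\ \min\{\,n-(m-1)\ell,\ t+2\ell\,\}.$$

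The second step is an elementary one-variable optimization. Put $g(\ell)=n-(m-1)\ell$, which is weakly decreasing since $m\ge 2$, and $h(\ell)=t+2\ell$, which is increasing; the two lines meet at $\ell^*=(n-t)/(m+1)$. Thus $\ell\mapsto\min\{g(\ell),h(\ell)\}$ is nondecreasing for $\ell\le\ell^*$ (where it equals $h$) and nonincreasing for $\ell\ge\ell^*$ (where it equals $g$), so its maximum over the integers of $[0,L]$ is attained at $\ell=0$ when $\ell^*\le 0$, at $\ell=L$ when $\ell^*\ge L$, and at $\lfloor\ell^*\rfloor$ or $\lceil\ell^*\rceil$ otherwise (both of which lie in $[0,L]$ in that regime). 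This yields $\pd(I_{n,m}^t)=g(0)=n$ if $n\le t$; $\pd(I_{n,m}^t)=h(L)=t+2L$ if $\ell^*\ge L$ (equivalently, writing $n=L(m+1)+r$ with $0\le r\le m$, if $t\le r$), in which case one checks $t+2L\le n$; and $\pd(I_{n,m}^t)=\max\{\,t+2\lfloor\ell^*\rfloor,\ n-(m-1)\lceil\ell^*\rceil\,\}$ in the remaining case $0<\ell^*<L$, where this value is $\le n$ because $g\le n$ and $h(\lfloor\ell^*\rfloor)\le h(\ell^*)=g(\ell^*)\le n$.

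It then remains to identify these three outputs with $\min\{\,n,\ t-1+\lfloor\frac{n-t+1}{m+1}\rfloor+\lceil\frac{n-t+1}{m+1}\rceil\,\}$. I would set $p:=n-t+1$, so $\ell^*=(p-1)/(m+1)$, and write $p-1=k(m+1)+\rho$ with $0\le\rho\le m$. A short case analysis then shows that $t-1+\lfloor\frac{p}{m+1}\rfloor+\lceil\frac{p}{m+1}\rceil$ equals $t+2k$ when $0\le\rho\le m-1$ and equals $t+2(k+1)-1$ when $\rho=m$; comparing with the previous paragraph, this is exactly $\max\{\,t+2\lfloor\ell^*\rfloor,\ n-(m-1)\lceil\ell^*\rceil\,\}$ in the middle regime (the key elementary fact being that $n-(m-1)(k+1)\le t+2k$ if and only if $\rho\le m-1$), while in the two boundary regimes it reproduces $t+2L$ and a quantity that is $\ge n$, so that the outer minimum with $n$ does the right bookkeeping. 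The genuinely new input, Theorem \ref{teo1}, is already available; the only real labor is this fiddly floor-ceiling accounting modulo $m+1$, and that is the step where I would expect to have to be most careful.
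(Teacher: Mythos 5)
Your proposal is correct and follows essentially the same route as the paper: both read off the non-vanishing conditions from Theorem \ref{teo1}, reduce the projective dimension to $\max_{0\le \ell\le \lfloor n/(m+1)\rfloor}\min\{t+2\ell,\,n-(m-1)\ell\}$, and then resolve this max--min by the same floor/ceiling bookkeeping (your $\lfloor \ell^*\rfloor,\lceil \ell^*\rceil$ analysis is just the paper's comparison of $t+2q$ with $n-(m-1)(q+1)$ for $q=\lfloor (n-t)/(m+1)\rfloor$). In fact you are slightly more explicit than the paper in checking that the outer minimum with $n$ handles the case $t\ge n$.
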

\begin{proof}
    By definition $p(n,t):=\pd(I_{n,m}^t) = \max \{ i \mid \beta_{i,j}(I_{n,m}^t) \neq 0\}$. By Eq. \eqref{eq_non_zero}, we deduce that 
    \begin{equation}\label{eq_pd}
        p(n,t) = \max \left \{ \min \{ t + 2\ell, n - (m-1) \ell\} \mid \ell = 0, \ldots  \left \lfloor \frac{n }{m+1} \right \rfloor \right \}.
    \end{equation}
    If $t \ge n$, then clearly, $p(n,t) = n$. Now, assume that $t \le n$. Let $q = \left \lfloor \frac{n-t }{m+1} \right \rfloor$. Then Eq. \eqref{eq_pd} becomes 
    \begin{align*}
        p(n,t) &=  \max \left \{ \max \{ t + 2\ell \mid \ell \le q\} , \max \left \{n - (m-1) \ell \mid q+1 \le \ell \le \left \lfloor \frac{n }{m+1} \right \rfloor  \right \}  \right \} \\
        &= \max\{t + 2q, n - (m-1)(q+1)\}\\
        &=t - 1 +  \left \lfloor \frac{n-t+1}{m+1} \right \rfloor + \left \lceil \frac{n-t+1}{m+1} \right \rceil.
    \end{align*}
    The conclusion follows.
\end{proof}
\begin{obs}
    \begin{enumerate}
        \item Corollary \ref{cor_t1} gives a simpler formula for \cite[Theorem 4.14]{AF}.
        \item Corollary \ref{cor_l0} gives the Betti numbers on the linear strand of $I_{n,m}^t$ which is simpler than \cite[Corollary 2.4]{SL}.
        \item Corollary \ref{cor_reg} is \cite[Theorem 3.6]{SL}.
        \item Corollary \ref{cor_pd} is \cite[Corollary 2.8]{BC1}.
        \item It is not straightforward to derive the formula for the multiplicity of $I_{n,m}^t$ given in \cite{SWL} from Theorem \ref{teo1}.
        \item The homological invariants of path ideals of cycles are much more complicated. See \cite{BC2, BCV, MTV} for some partial results.
    \end{enumerate}
\end{obs}

% \subsection*{Aknowledgments} 

\vspace{1mm}
\subsection*{Data availability}

Data sharing does not apply to this article as no data sets were generated or analyzed during the current study.

\subsection*{Conflict of interest}

The authors have no relevant financial interests to disclose.


\begin{thebibliography}{9}

\bibitem[AF]{AF} A. Alilooee and S. Faridi, \emph{Graded Betti numbers of path ideals of cycles and lines}, 
J. Algebra Appl. \textbf{17, no. 1}, 1850011, 17pp.


\bibitem[BC1]{BC1} S. B\u al\u anescu and M. Cimpoea\c s, \emph{Depth and Stanley depth of powers of the path ideal of a path graph}, 
                 arXiv:2303.01132.


\bibitem[BC2]{BC2} S. B\u al\u anescu and M. Cimpoea\c s, \emph{Depth and Stanley depth of powers of the path ideal of a cycle graph}, 
                 arXiv:2303.15032v3, (2024).


\bibitem[BCV]{BCV} S. B\u al\u anescu, M. Cimpoea\c s, and T. Vu, \emph{Betti numbers of powers of path ideals of cycles},                 arXiv:2404.17880.

       
\bibitem[EK]{ek}  S.\ Eliahou, M.\ Kervaire, \emph{Minimal resolutions of some monomial ideals}, J. Algebra \textbf{129, no. 1}
(1990), 1--25.

\bibitem[FHV]{FHV} 
C. Francisco, H.T. Ha, and A. Van Tuyl, 
{\it Splittings of monomial ideals}, Proc. Amer. Math. Soc. {\bf 137} (2009), 3271--3282.


\bibitem[HV]{HV} 
N. T. Hang and T. Vu,
{\em Projective dimension and regularity of 3-path ideals of unicyclic graphs}, arXiv:2402.16166.


\bibitem[HH] {HH} 
J. Herzog, T. Hibi, 
{\it Monomial ideals}, Graduate Texts in Mathematics {\bf 260}, Springer, 2011.

\bibitem[MTV]{MTV} 
N. C. Minh, T. N. Trung, and T. Vu,
{\em Depth of powers of edge ideals of cycles and starlike trees},
to appear in Rocky M. J. Math. arXiv:2308.00874.


\bibitem[NV] {NV} 
H. D. Nguyen and T. Vu,
{\em Powers of sums and their homological invariants},
	J. Pure Appl. Algebra {\bf 223} (2019), 3081--3111.


\bibitem[SL]{SL} J.\ Shan and D. Lu, \emph{Regularity of powers of path ideals of line graphs}, arXiv:2305.00670.

\bibitem[SWL]{SWL} J.\ Shan, Z. Wang, and D. Lu, \emph{Multiplicity of powers of path ideals of a line graph}, arXiv:2402.13477.

 
\end{thebibliography}
\end{document}